\def\rmlabel{\upshape({\itshape \roman*\,})}
\let\origsection=\section %
\def\section{\@ifstar{\origsection*}{\mysection}}
\def\mysection{\@startsection{section}{1}\z@{.7\linespacing\@plus\linespacing}{.5\linespacing}{\normalfont\scshape\centering\S}}
\renewcommand{\PrintDOI}[1]{\doi{#1}}
\newcommand*\patchAmsMathEnvironmentForLineno[1]{%
\expandafter\let\csname old#1\expandafter\endcsname\csname #1\endcsname
\expandafter\let\csname oldend#1\expandafter\endcsname\csname end#1\endcsname
\renewenvironment{#1}%
{\linenomath\csname old#1\endcsname}%
{\csname oldend#1\endcsname\endlinenomath}}%
\newcommand*\patchBothAmsMathEnvironmentsForLineno[1]{%
\patchAmsMathEnvironmentForLineno{#1}%
\patchAmsMathEnvironmentForLineno{#1*}}%
\newcommand{\oldqed}{}
\def\endofFact{\hfill\scalebox{.6}{$\Box$}}
\newenvironment{claimproof}[1][Proof]{
  \renewcommand{\oldqed}{\qedsymbol}
  \renewcommand{\qedsymbol}{\endofFact}
  \begin{proof}[#1]
}{
  \end{proof}
  \renewcommand{\qedsymbol}{\oldqed}
} 
\newtheorem{theorem}{Theorem}[section]
\newtheorem{lemma}[theorem]{Lemma}
\newtheorem{proposition}[theorem]{Proposition}
\newtheorem{conjecture}[theorem]{Conjecture}
\newtheorem{claim}[theorem]{Claim}
\newtheorem{fact}[theorem]{Fact}
\theoremstyle{definition}
\newtheorem{definition}[theorem]{Definition}
\theoremstyle{case}
\numberwithin{equation}{section}
\newcommand{\floor}[1]{\lfloor #1 \rfloor}
\newcommand{\ceil}[1]{\lceil #1 \rceil}
\newcommand\tand{\ \text{and}\ }
\newcommand{\sD}{\mathcal{D}}
\newcommand{\ini}{{\rm ini}}
\tikzset{red vertex/.style={circle,draw,minimum size=2mm,inner sep=0pt,outer sep=4pt,fill=red, color=red}}
\tikzset{blue vertex/.style={circle,draw,minimum size=2mm,inner sep=0pt,outer sep=4pt,fill=blue, color=blue}}
\tikzset{black vertex/.style={circle,draw,minimum size=2mm,inner sep=0pt,outer sep=3pt,fill=black, color=black}}
\tikzset{white vertex/.style={circle,draw,minimum size=2mm,inner sep=0pt,outer sep=3pt, color=black}}
\tikzset{green vertex/.style={circle,draw,minimum size=2mm,inner sep=0pt,outer sep=3pt, color=green, fill=green}}
\tikzstyle{color1}=[color=black] 
\tikzstyle{color2}=[color=blue]
\tikzstyle{color3}=[color=red]
\tikzstyle{color4}=[color=green]
\tikzstyle{edge}=[line width=0.9]
\tikzstyle{possible edge}=[edge, dashed]
\tikzstyle{fucking loosely dotted}=[dash pattern=on \pgflinewidth off 6pt]
\tikzstyle{nonedge}=[color=red, fucking loosely dotted, line width=1.3]
\tikzstyle{snake}=[decorate, decoration=snake, segment length=1cm]
\tikzstyle{short snake}=[decorate, decoration=snake, segment length=7mm]
\colorlet{setfilling}{green!5!white}
\colorlet{setborder}{gray}
\newcommand{\chii}{\chi'_{\rm irr}}
\def\rmlabel{\upshape({\itshape \roman*\,})}
\def\red{\text{\rm red}}
\def\blue{\text{\rm blue}}
\def\green{\text{\rm green}}
\def\qand{\quad\text{and}\quad}
\let\phi=\varphi
\begin{document}

\title[Decomposing split graphs into locally irregular graphs]{Decomposing split graphs into\\ locally irregular graphs}

\author{C. N. Lintzmayer, G. O. Mota, and M. Sambinelli}

\shortdate
\yyyymmdddate
\settimeformat{ampmtime}
\date{\today, \currenttime}

\address{Center for Mathematics. Computing and Cognition. Federal University of ABC. Santo Andr{\'e}, S{\~a}o Paulo, Brazil}
\email{\href{mailto:carla.negri@ufabc.edu.br}{\texttt{carla.negri@ufabc.edu.br}}, \href{mailto:g.mota@ufabc.edu.br}{\texttt{g.mota@ufabc.edu.br}}}

\address{Institute of Mathematics and Statistics. University of S{\~a}o Paulo. S{\~a}o Paulo, Brazil}
\email{\href{mailto:sambinelli@ime.usp.br}{\texttt{sambinelli@ime.usp.br}}}

\thanks{G. O. Mota was supported by CNPq (304733/2017-2, 428385/2018-4) and FAPESP (2018/04876-1).
  M. Sambinelli was supported by FAPESP (2017/23623-4).
  This study was financed in part by the Coordenação de Aperfeiçoamento de Pessoal de Nível Superior, Brasil (CAPES), Finance Code 001.
  The research that led to this paper started in  the Workshop Paulista em Otimização, Combinatória e Algoritmos (WoPOCA) 2018, which was financed by FAPESP (2013/03447-6) and CNPq (456792/2014-7).
  FAPESP is the S\~ao Paulo Research Foundation, CAPES is the Coordination for the Improvement of Higher Education Personnel, and CNPq is the National
  Council for Scientific and Technological Development of Brazil.
}

\begin{abstract}
    A graph is locally irregular if any pair of adjacent vertices have distinct degrees.
    A locally irregular decomposition of a graph $G$ is a decomposition $\sD$ of $G$ such that every subgraph $H \in \sD$ is locally irregular.
    A graph is said to be decomposable if it admits a locally irregular decomposition.
    We prove that any decomposable split graph can be decomposed into at most three locally irregular subgraphs and we characterize all split graphs whose decomposition can be into one, two or three locally irregular subgraphs.
\end{abstract}

\maketitle

\section{Introduction}
\label{sec:introduction}

We assume that all the graphs in this text are finite and simple.
Terminology and notation used here are standard, and for missing definition we refer the reader to~\cite{Bo98,BondyMurty2008,Di10}.
Given a graph $G$, a collection $\sD = \{H_1, \ldots, H_k\}$ of subgraphs of $G$ is a \emph{decomposition} of $G$ if $\{E(H_1), \ldots, E(H_k)\}$ is a partition of $E(G)$.
A graph is \emph{locally irregular} if any pair of adjacent vertices have distinct degrees.
A \emph{locally irregular decomposition} of a graph $G$ is a decomposition $\sD$ of $G$ such that every subgraph $H \in \sD$ is locally irregular.
Not all graphs admit a locally irregular decomposition; take for example the complete graph with three vertices.
We say that a graph is \emph{decomposable} if it admits a locally irregular decomposition.
Given a decomposable graph $G$, the \emph{irregular chromatic index} of $G$, denoted by $\chii(G)$, is the smallest size of a locally irregular decomposition of $G$.
Alternatively, one can see a locally irregular decomposition of a graph $G$ as an edge coloring of $G$ such that each color induces a locally irregular graph.
We call such coloring of a \emph{locally irregular edge coloring}.

The problem of determining the irregular chromatic index of graphs is closely related to the 1-2-3 Conjecture posed by Kar\'onski, {\L}uczak and Thomason~\cite{KaLuTh04}, which states that a graph $G=(V,E)$ can be made locally irregular by changing some edges in $E$ by two or three parallel edges.

Locally irregular decomposition as defined above was introduced by Baudon, Bensmail, Przyby\l{}o, and Wo\'{z}niak~\cite{BaBePrWo15}.
They characterized all graphs that are decomposable and proved that $d$-regular graphs with $d \geq 10^7$ admits a locally irregular 3-edge coloring.
In~\cite{BaBePrWo15} they also describe a locally irregular 3-edge coloring for trees that are not an odd-length path, and for $K_n$ with $n \geq 4$, and showed a locally irregular 2-edge coloring for regular bipartite graphs with minimum degree at least 3.
Furthermore, they proved that $\chii(G) \leq \floor{|E(G)|/2}$ for all decomposable graphs $G$ and posed the following conjecture.

\begin{conjecture}[Baudon, Bensmail, Przyby\l{}o, and Wo\'{z}niak, 2015~\cite{BaBePrWo15}]
\label{conj:chi_leq_3}
    If $G$ is a decomposable graph, then $\chii(G) \leq 3$.
\end{conjecture}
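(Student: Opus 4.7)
The plan is to tackle Conjecture~\ref{conj:chi_leq_3} by analyzing a minimum counterexample $G$ --- one that is decomposable, satisfies $\chii(G) \ge 4$, and has the fewest edges with these properties. By minimality together with the Baudon--Bensmail--Przyby\l{}o--Wo\'zniak characterization of decomposable graphs, one gets for free that $G$ is $2$-edge-connected, avoids every small exceptional obstruction (odd paths, odd cycles, and the small triangle family), and that every proper decomposable subgraph of $G$ already admits a $3$-decomposition. The first step of the argument would be to distill these reductions into enough structure --- large minimum degree, no cut vertex separating into smaller decomposable blocks, no two twin leaves, and so on --- to turn subsequent manipulations into finite local checks.

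The core strategy is then to extract a single locally irregular subgraph $H \subseteq G$ whose removal leaves a decomposable graph $G - E(H)$ with $\chii(G - E(H)) \le 2$. A plausible candidate for $H$ is a union of short paths and stars anchored at vertices of maximum degree $\Delta(G)$, chosen so that each edge moved into $H$ separates its endpoints' degrees both inside $H$ and inside the leftover graph. One would then appeal to an auxiliary lemma in the spirit of the regular bipartite case already handled by Baudon et al., certifying that $G - E(H)$ is $2$-decomposable whenever its degree sequence is sufficiently ``two-sided''. In effect, this reduces the conjecture to proving that such a peelable $H$ exists outside a finite list of exceptions that can be verified by hand.

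The principal obstacle is the brittleness of local irregularity under edge deletion: removing an edge $uv$ drops both $\deg(u)$ and $\deg(v)$, which can create a new monochromatic equal-degree adjacency somewhere else. This rules out any simple greedy extraction of $H$. The hardest case is expected to be near-regular graphs --- where every vertex has degree in a narrow range and the degree sequence offers almost no slack --- and graphs with many pairs of twin vertices, which are exactly the configurations where even carefully chosen peels tend to produce conflicts. Here one likely has to combine a probabilistic assignment of edges to three colors, controlled by the Lov\'asz Local Lemma, with a deterministic correction phase that reroutes a bounded number of edges to eliminate residual conflicts. Given how much post-2015 progress has stopped short of the full conjecture --- including the split-graph case handled in the present paper --- this correction phase is precisely the step I expect to be the decisive bottleneck.
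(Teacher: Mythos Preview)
The statement you are attempting to prove is Conjecture~\ref{conj:chi_leq_3}, and the paper does not prove it. It is quoted as an open problem from Baudon, Bensmail, Przyby\l{}o, and Wo\'{z}niak, and the paper's contribution is to verify it only for the class of split graphs (Theorems~\ref{thm:main} and~\ref{thm:small}). So there is no ``paper's own proof'' of this statement against which to compare your proposal.

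As for the proposal itself: it is a research outline, not a proof, and you essentially concede this in the last paragraph. The reduction to extracting a single locally irregular subgraph $H$ with $\chii(G - E(H)) \le 2$ is not justified --- no such auxiliary lemma guaranteeing a $2$-decomposition of the remainder is stated, let alone proved, and the known upper bound for general bipartite graphs is still $7$, not $2$. The probabilistic assignment via the Lov\'asz Local Lemma plus a ``deterministic correction phase'' is exactly the strategy behind Przyby\l{}o's result for minimum degree at least $10^{10}$ and the subsequent bounds of $328$ and $220$; none of those authors could push the correction phase down to $3$ colors in general, and you give no new idea for why it should work here. In short, the gap is not a missing detail but the entire hard part of an open conjecture.
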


Although one can check in polynomial time whether a graph $G$ is locally irregular, deciding if there exists a locally irregular 2-edge coloring of $G$ is NP-complete, even when restricted to planar graphs with maximum degree at most 6~\cite{BaBeSo15}.
Note that proving Conjecture~\ref{conj:chi_leq_3} would show that deciding whether there exists a locally irregular $k$-edge coloring for $k \geq 3$ is in P.

A result from Przyby\l{}o~\cite{Pr17} shows that every graph with minimum degree at least $10^{10}$ admits a locally irregular 3-edge coloring.
Bensmail, Merker, and Thomassen~\cite{BeMeTh17} gave the first constant upper bound on $\chii(G)$ for general decomposable graphs $G$, showing that $\chii(G) \leq 328$.
They also showed that $\chii(G) \leq 2$ for every 16-edge-connected bipartite graph $G$, and for bipartite graphs $G$ they obtained the bound $\chii(G) \leq 10$.
Lu\v{z}ar, Przyby\l{}o, and Sot\'ak~\cite{LuPrSo18} improved these results by showing that $\chi(G) \leq 220$ for any decomposable graph $G$, and that $\chii(G) \leq 7$ for any bipartite graph $G$.
They also showed that if $G$ is subcubic, then $\chi(G) \leq 4$.

\subsection{Split graphs}

A graph $G$ is \emph{split} if there exists a partition $\{X, Y\}$ of $V(G)$ such that $G[X]$ is a complete graph and $Y$ is a stable set.
We show that every decomposable split graph $G$ has $\chii(G) \leq 3$ and we characterize all split graphs $G$ with $\chii(G) = 1$, $\chii(G) = 2$ and $\chii(G) = 3$.

When defining a split graph $G$, it may be useful to write $G(X,Y)$ to also define a partition $\{X, Y\}$ of~$V(G)$, where $X$ is a maximal clique and $Y = V(G) \setminus X$ is a stable set.

Let $G(X,Y)$ be a split graph with $X = \{v_1, \ldots, v_n\}$.
For any~$v_i \in X$, we denote by $d_G(v_i, Y)$ the number of neighbors of $v_i$ in $Y$, i.e., $d_G(v_i, Y) = |N_G(v_i) \cap Y|$.
For simplicity we just write~$d_i$ for $d_G(v_i, Y)$ whenever the graph $G$ and the stable set $Y$ are clear from the context.

It is easy to verify that for split graphs $G(X,Y)$ with $X = \{v_1, \ldots, v_n\}$, we have $\chii(G) = 1$ if and only if $d_1 > \cdots > d_n$.
In fact, since $X$ is a maximal clique, $d_G(y) \leq n-1$ for all $y \in Y$, which implies that $d_G(y)$ is smaller than the degree of all its neighbors in $X$.
Therefore, $G$ is locally irregular if and only if the vertices in $X$ have distinct degrees in $G$, which is possible if and only if $d_1 > \cdots > d_n$, and hence the result follows.
We state this in the following fact.

\begin{fact}
\label{fact:splitlarge-0}
    Let $G(X,Y)$ be a split graph with $X = \{v_1, \ldots, v_n\}$ where $d_1 \geq \cdots \geq d_n$ and $n \geq 2$.
    We have $\chii(G) = 1$ if and only if $d_1 > \cdots > d_n$.
\end{fact}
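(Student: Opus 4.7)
The plan is to reduce the claim to a purely local calculation on degrees. By definition $\chii(G)=1$ exactly when $G$ itself is locally irregular, so the task is to identify when adjacent vertices of $G$ can have equal degree.

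First I would handle edges with an endpoint in $Y$. Since $X$ is a maximal clique, every $y\in Y$ has a non-neighbor in $X$, so $d_G(y)\leq n-1$. On the other hand, each $v_i\in X$ satisfies $d_G(v_i)=(n-1)+d_i$. If $y\in Y$ is adjacent to $v_i$, then $d_i\geq 1$, hence $d_G(v_i)\geq n>n-1\geq d_G(y)$. Thus no edge of $G$ between $X$ and $Y$ can witness a local-irregularity violation.

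It remains to analyse the edges inside the clique $X$. Any two distinct $v_i,v_j\in X$ are adjacent, so $G$ is locally irregular if and only if $d_G(v_1),\ldots,d_G(v_n)$ are pairwise distinct. Writing $d_G(v_i)=(n-1)+d_i$ shows this is equivalent to the $d_i$'s being pairwise distinct, which under the assumed ordering $d_1\geq\cdots\geq d_n$ amounts to the strict chain $d_1>\cdots>d_n$. There is no real obstacle here; the only subtlety worth stating explicitly is the use of maximality of $X$ to bound $d_G(y)\leq n-1$, which is what prevents an edge $v_iy$ with $d_i=1$ from accidentally joining equal-degree endpoints.
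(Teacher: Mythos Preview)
Your proof is correct and follows essentially the same approach as the paper: use maximality of $X$ to bound $d_G(y)\le n-1$ for $y\in Y$, observe that any neighbor $v_i$ of $y$ then has $d_G(v_i)\ge n$, and reduce the question to whether the clique vertices have pairwise distinct degrees. The paper's argument is just a terser version of yours, with the same key ingredients.
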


Our main result is Theorem~\ref{thm:main}, which describes $\chii(G)$ for all split graphs $G(X,Y)$ such that $X$ is a clique with at least~$10$ vertices.

\begin{theorem}
\label{thm:main}
    Let $G(X,Y)$ be a split graph with $X = \{v_1, \ldots, v_n\}$ where $d_1 \geq \cdots \geq d_n$.
    If $n \geq 10$, then the following holds.
    \begin{enumerate}[label=\rmlabel]
        \item $\chii(G) \leq 2$ if and only if $d_1 \geq \floor{n/2}$ or $d_2 \geq 1$;
        \item $\chii(G) = 3$ if and only if $d_1 < \floor{n/2}$ and $d_2 = 0$.
    \end{enumerate}
\end{theorem}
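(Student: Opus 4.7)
The theorem reduces to three tasks: the $\Leftarrow$ direction of~(i), i.e.\ constructing a locally irregular $2$-decomposition whenever $d_1 \geq \lfloor n/2 \rfloor$ or $d_2 \geq 1$; the upper bound $\chii(G) \leq 3$ in the remaining range; and the lower bound $\chii(G) \geq 3$ in that same range. The $\Rightarrow$ direction of~(i) is the contrapositive of the lower bound, and~(ii) follows by combining the upper and lower bounds.

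For the $\Leftarrow$ direction of~(i), I would split into two overlapping subcases. If $d_2 \geq 1$, I would start from a balanced $2$-edge coloring of $K_n$ (one that splits the induced $X$-degrees into two near-arithmetic sequences) and then reroute $Y$-edges greedily, each reroute resolving one degree collision in one color while keeping every endpoint of a colored $Y$-edge from having colored degree~$1$. If $d_1 \geq \lfloor n/2 \rfloor$, I would exploit the large fan at $v_1$ instead: place most of $E(K_n)$ together with a carefully chosen share of the $v_1 Y$-edges into color~$1$ so that the degree of $v_1$ becomes the unique maximum among color-$1$ degrees on $X$, and leave color~$2$ as a small, easily controlled subgraph on $X$ with the remaining $v_1 Y$-edges. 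In both subcases $n \geq 10$ supplies enough slack to repair any leftover collision by toggling one further edge.

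For the upper bound in~(ii), the hypothesis $d_2 = 0$ makes $G$ (modulo isolated $Y$-vertices) equal to $K_n$ plus $d_1$ pendant edges at $v_1$. Starting from the locally irregular $3$-edge coloring of $K_n$ provided by~\cite{BaBePrWo15} (valid since $n \geq 4$), I would assign all pendant edges to a single color class $c^\star$ chosen so that the resulting $c^\star$-degree of $v_1$ avoids $1$ and avoids each (unchanged) $c^\star$-degree of $v_1$'s $c^\star$-neighbors; with three colors to choose from and $n \geq 10$ such a $c^\star$ is easy to exhibit, and the other two color classes remain locally irregular because their edges are untouched.

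The main obstacle is the lower bound $\chii(G) \geq 3$ under $d_1 < \lfloor n/2 \rfloor$ and $d_2 = 0$. Suppose a $2$-decomposition $\{H_1, H_2\}$ exists, and for $j \geq 2$ set $\delta_j^{(i)} := d_{H_i}(v_j)$, so $\delta_j^{(1)} + \delta_j^{(2)} = n-1$. A first reduction is that for distinct $j,k \geq 2$ the edge $v_jv_k \in G[X]$ lies in some $H_i$, forcing $\delta_j^{(i)} \neq \delta_k^{(i)}$ and hence also $\delta_j^{(i')} \neq \delta_k^{(i')}$; so $\{\delta_j^{(1)} : j \geq 2\} = \{0,\dots,n-1\} \setminus \{m^{(1)}\}$ for a unique~$m^{(1)}$, there is similarly $m^{(2)}$, and summing over both colors gives $m^{(1)} + m^{(2)} = n-1$. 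A second reduction is that if some $v_j$ ($j \geq 2$) has $\delta_j^{(1)} = n-1$ then $v_j$ is $H_1$-adjacent to every other $X$-vertex, forcing $m^{(1)} = 0$; combined with the previous identity this yields $\{m^{(1)}, m^{(2)}\} = \{0, n-1\}$, so we may assume $m^{(1)} = 0$. Labeling the $v_j$ ($j \geq 2$) with $\delta_j^{(1)} = d$ by~$u_d$, setting $S := \{d : v_1 u_d \in H_1\}$, and letting $s_1 \in \{0, \dots, d_1\}$ be the number of $Y$-edges at $v_1$ in $H_1$, we get $n-1 \in S$, local irregularity at $v_1$ forces $\delta_1^{(1)} = |S| + s_1 \notin S$, and the degree sequence of $H_1 \cap G[X \setminus \{v_1\}]$ is $(d - \mathbf{1}[d \in S])_{d=1}^{n-1}$. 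To conclude, I would show that for $n \geq 10$ no choice of $S$ simultaneously passes the Erd\H{o}s--Gallai realizability test for that degree sequence and satisfies the avoidance condition $\{|S|, \dots, |S|+d_1\} \setminus S \neq \emptyset$: realizability eliminates the $S$ for which the sequence is too close to that of $K_{n-1}$, while avoidance fails exactly for sets like $\{\lceil n/2 \rceil, \dots, n-1\}$ that cover the entire interval $[|S|, |S|+d_1]$. This case analysis, in which both hypotheses $d_1 < \lfloor n/2 \rfloor$ and $n \geq 10$ become sharp, is the most delicate step of the proof.
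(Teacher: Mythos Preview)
Your overall architecture is sound, and for the lower bound your initial reductions match the paper's: the observation that $\delta_j^{(1)} + \delta_j^{(2)} = n-1$ forces all $\delta_j^{(i)}$ ($j \geq 2$) to be pairwise distinct, and that the missing values satisfy $\{m^{(1)}, m^{(2)}\} = \{0, n-1\}$, is exactly how the paper's lower-bound claim opens. Where you diverge is at the endgame: you propose an Erd\H{o}s--Gallai case analysis on the sequence $(d - \mathbf{1}[d \in S])_{d=1}^{n-1}$, whereas the paper proves a structural lemma (Proposition~\ref{prop:n-1-distinct-degrees}) showing that any connected graph on $m$ vertices with exactly one repeated degree must have that degree equal to $\lfloor m/2 \rfloor$, must contain a clique on the $\lceil m/2 \rceil - 1$ top-degree vertices, and must have both repeated-degree vertices adjacent to that entire clique. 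From this the paper reads off that $d_{G_\red}(v_1)$ lands in the interval $[n'/2+1,\,n'-1]$ already occupied by the clique, a contradiction. Your Erd\H{o}s--Gallai route can in principle reach the same conclusion---the realizable $S$ turn out to be essentially the interval $\{\lfloor n/2 \rfloor + 1, \ldots, n-1\}$---but carrying that out is tantamount to re-proving Proposition~\ref{prop:n-1-distinct-degrees}, and you have not supplied that argument; you have also omitted the colour-$2$ avoidance constraint on $v_1$, which is needed to rule out the boundary choice $s_1 = 0$.

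The more serious gap is on the construction side. Your case split for $\chii \leq 2$ is $d_2 \geq 1$ versus $d_1 \geq \lfloor n/2 \rfloor$, handled respectively by ``balanced colouring of $K_n$ plus greedy rerouting of $Y$-edges'' and by ``most of $E(K_n)$ in colour~$1$.'' Neither sketch confronts the genuinely hard regime $d_{\lfloor n/2 \rfloor} = 0$ with $d_2 \geq 1$: here at most $\lfloor n/2 \rfloor - 1$ vertices of $X$ have any $Y$-edges at all, so there is almost nothing to reroute, and a single toggle does not repair the collision a normal colouring of $K_n$ leaves behind. The paper splits instead along $d_{\lfloor n/2 \rfloor} \geq 1$ versus $d_{\lfloor n/2 \rfloor} = 0$: the first case uses the normal colouring (Lemma~\ref{lemma:splitlarge-2}), while the second requires a bespoke ``strange colouring'' of $K_n$ (Definition~\ref{def:strange}, Lemma~\ref{lemma:splitlarge-3}) followed by inversions along carefully chosen alternating $4$- and $6$-cycles to eliminate the residual conflicts at $v_1$ and $v_2$. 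This construction is the technical heart of the upper bound and is absent from your outline. Separately, ``most of $E(K_n)$ in colour~$1$'' cannot yield pairwise distinct colour-$1$ degrees on $X \setminus \{v_1\}$ without removing on the order of half the edges, so that idea also needs rethinking.
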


Theorem~\ref{thm:main} is proved in Section~\ref{sec:large-split}, and the result about split graphs with a very small maximal clique is discussed in Section~\ref{sec:small-split} (see Theorem~\ref{thm:small}).

In the remainder of the paper, given a graph $G$ and a coloring $\phi \colon E(G) \to \{\red,\blue\}$, we denote the two edge-disjoint spanning monochromatic subgraphs under $\phi$ by $G_{\red,\phi}$ and $G_{\blue,\phi}$.
Formally,
\begin{equation*}
    G_{\red,\phi} = \big(V(G), \phi^{-1}(\red)\big)
    \qand
    G_{\blue,\phi} = \big(V(G), \phi^{-1}(\blue)\big) \enspace .
\end{equation*}
We may omit the term $\phi$ from $G_{\red,\phi}$ and $G_{\blue,\phi}$ whenever $\phi$ is clear from the context.
This notation naturally extends to colorings that use more than two colors.

\section{Decomposing split graphs with a large maximal clique}%
\label{sec:large-split}

In this section we give a characterization of the irregular chromatic index of all split graphs with a maximal clique that has at least~$10$ vertices.

Let $G(X,Y)$ with $X = \{v_1, \ldots, v_n\}$ with $n \geq 10$.
In Lemma~\ref{lemma:splitlarge-1} we prove that $d_1 < \floor{n/2}$ and $d_2 = 0$ implies $\chii(G) = 3$.
We also prove that if $d_1 \geq \floor{n/2}$ or $d_2 \geq 1$, then $\chii(G) \leq 2$, which follows directly from Lemmas~\ref{lemma:splitlarge-2} and~\ref{lemma:splitlarge-3}.
Therefore, note that Theorem~\ref{thm:main} follows from Lemmas~\ref{lemma:splitlarge-1},~\ref{lemma:splitlarge-2} and~\ref{lemma:splitlarge-3}.

In Section~\ref{sub:sec:normal} we prove Lemmas~\ref{lemma:splitlarge-1} and~\ref{lemma:splitlarge-2}.
The starting point for proving these lemmas is a specific coloring of $E(K_n)$, which we call \emph{normal}, given in Definition~\ref{def:normal}.
In Section~\ref{sub:sec:strange} we prove Lemma~\ref{lemma:splitlarge-3}.
For proving this result, we start with an intricate coloring of $E(K_n)$, which we call \emph{strange} (see Definition~\ref{def:strange}).

Given a graph $G$, we say that the edge $uv \in E(G)$ is a \emph{conflicting edge} if $d_G(u) = d_G(v)$.

\subsection{Normal colorings of complete graphs}%
\label{sub:sec:normal}

We start this section by defining \emph{normal colorings} of complete graphs.
See Figure~\ref{fig:normal} for example.

\begin{definition}[\emph{Normal colorings}]%
\label{def:normal}
    Given a complete graph $G$ with $n$ vertices and a sequence $\vec V = (v_1, \ldots, v_n)$ of~$V(G)$, the \emph{normal coloring for $\vec V$} is the $2$-edge coloring $\phi \colon E(G) \to \{\red, \blue\}$ defined as follows, where $X_1 = \{v_1, \ldots, v_{\ceil{n/2}}\}$ and $X_2 = V(G) \setminus X_1$:
    \begin{enumerate}[label=\rmlabel]
        \item $G_\red[X_1]$ is a complete graph;
        \item $G_\red[X_2]$ contains no edges;
        \item $N_{G_\red}(v_i) = \{v_1, \ldots, v_{n-i+1}\}$ for $\ceil{n/2}+1 \leq i \leq n$.
    \end{enumerate}
\end{definition}

Note that in a normal coloring of a complete graph $G$ for a sequence $\vec V = (v_1, \ldots, v_n)$, we have 
\begin{itemize}
    \item $d_{G_{\red, \phi}}(v_i) = n - i \text{, for } 1 \leq i \leq \ceil{n/2}$;
    \item $d_{G_{\red, \phi}}(v_i) = n - i + 1 \text{, for } \ceil{n / 2} + 1 \leq i \leq n$.
\end{itemize}

Therefore, we know that, for a normal coloring $\phi$ of $G$,
\begin{equation}
\label{eq:normal:samedegree}
    \text{the only vertices with same degree in } G_{\red,\phi} \text{ (and\ } G_{\blue,\phi} \text{) are } v_{\ceil{n/2}} \tand v_{\ceil{n/2} + 1} \enspace.
\end{equation}

From the definition of normal colorings and~\eqref{eq:normal:samedegree}, since there is a (unique) conflicting edge $v_{\ceil{n/2}} v_{\ceil{n/2}+1}$, we know that if $n$ is even (resp.\ odd), then $G_\blue$ (resp.\ $G_\red$) is locally irregular and $G_\red$ (resp.\ $G_\blue$) is not locally irregular.

\begin{figure}
    \centering
\begin{tikzpicture}[scale=0.8]

    \pgfmathsetmacro{\qtd}{int(10)}
    \pgfmathsetmacro{\raio}{2.7}

    \tikzset{black vertex/.style={circle,draw,minimum size=2mm,inner sep=0pt,outer sep=1pt,fill=black, color=black}}
    \tikzset{redconf vertex/.style={circle,minimum size=3mm,inner sep=0pt,outer sep=1pt,fill=red,draw=white}}
    \tikzset{blueconf vertex/.style={circle,minimum size=3mm,inner sep=0pt,outer sep=1pt,fill=blue,draw=white}}

    \tikzstyle{red edge}=[line width=1, red]
  
    \pgfmathsetmacro{\espaco}{360/\qtd}
    \pgfmathsetmacro{\espacoMetade}{360/(\qtd*2)}    
    \pgfmathsetmacro{\rot}{-\espaco - 90 + \espacoMetade}
        
    \pgfmathsetmacro{\qtdMetade}{int(\qtd/2)}
    \pgfmathsetmacro{\qtdMetadeMenosDois}{int(\qtd/2 - 2)}
    \pgfmathsetmacro{\qtdMetadeMenosUm}{int(\qtd/2 -1)}
    \pgfmathsetmacro{\qtdMetadeMaisDois}{int(\qtd/2 + 2)}
    \pgfmathsetmacro{\qtdMetadeMaisUm}{int(\qtd/2 + 1)}
    \pgfmathsetmacro{\qtdMetadeMaisTres}{int(\qtd/2 + 3)}
    \pgfmathsetmacro{\qtdMetadeMaisQuatro}{int(\qtd/2 + 4)}

    \pgfmathsetmacro{\qtdMenosTres}{int(\qtd - 3)}
    \pgfmathsetmacro{\qtdMenosDois}{int(\qtd - 2)}
    \pgfmathsetmacro{\qtdMenosUm}{int(\qtd-1)}

    \draw[very thick,dashed,black] (90:\raio + 1) -- (-90:\raio + 1);

    \foreach \x in {1,...,\qtd} {
        \pgfmathsetmacro{\ang}{\x*\espaco+\rot}
        \node (\x) [black vertex] at (\ang:\raio) {};
    }

    \foreach \x in {1,...,\qtdMetade} {
        \pgfmathsetmacro{\xMaisUm}{int(\x + 1)}
        \pgfmathsetmacro{\fim}{int(\qtd - \x + 1)}
        \foreach \y in {\xMaisUm,...,\fim} {
            \draw[red edge] (\x) -- (\y);
        }
    }

    \foreach \x in {1,...,\qtdMetade} {
        \pgfmathsetmacro{\redDegree}{int(\qtd - \x)}
        \pgfmathsetmacro{\blueDegree}{int(\x - 1)}
        \node (\x_label) [ ] at (\x * \espaco+\rot:\raio + 1) {$v_{\x}$ ({\color{red} $\redDegree$}, {\color{blue} $\blueDegree$})};
    }

    \foreach \x in {\qtdMetadeMaisUm,...,\qtd} {
        \pgfmathsetmacro{\redDegree}{int(\qtd - \x + 1)}
        \pgfmathsetmacro{\blueDegree}{int(\x - 2)}
        \node (\x_label) [ ] at (\x * \espaco+\rot:\raio + 1) {$v_{\x}$ ({\color{red} $\redDegree$}, {\color{blue} $\blueDegree$})};
    }

    \node () [redconf vertex] at (\qtdMetade * \espaco+\rot:\raio) {};

    \node () [redconf vertex] at (\qtdMetadeMaisUm * \espaco+\rot:\raio) {};
  
\end{tikzpicture}
     \hfill
\begin{tikzpicture}[scale=0.8]

    \pgfmathsetmacro{\qtd}{int(11)}
    \pgfmathsetmacro{\raio}{2.7}

    \tikzset{black vertex/.style={circle,draw,minimum size=2mm,inner sep=0pt,outer sep=1pt,fill=black, color=black}}
    \tikzset{redconf vertex/.style={circle,minimum size=3mm,inner sep=0pt,outer sep=1pt,fill=red,draw=white}}
    \tikzset{blueconf vertex/.style={circle,minimum size=3mm,inner sep=0pt,outer sep=1pt,fill=blue,draw=white}}

    \tikzstyle{red edge}=[line width=1, red]
  
    \pgfmathsetmacro{\espaco}{360/\qtd}
    \pgfmathsetmacro{\espacoMetade}{360/(\qtd*2)}    
    \pgfmathsetmacro{\rot}{-\espaco - 90 + \espacoMetade}
        
    \pgfmathsetmacro{\qtdMetade}{int(\qtd/2)}
    \pgfmathsetmacro{\qtdMetadeMenosDois}{int(\qtd/2 - 2)}
    \pgfmathsetmacro{\qtdMetadeMenosUm}{int(\qtd/2 -1)}
    \pgfmathsetmacro{\qtdMetadeMaisDois}{int(\qtd/2 + 2)}
    \pgfmathsetmacro{\qtdMetadeMaisUm}{int(\qtd/2 + 1)}
    \pgfmathsetmacro{\qtdMetadeMaisTres}{int(\qtd/2 + 3)}
    \pgfmathsetmacro{\qtdMetadeMaisQuatro}{int(\qtd/2 + 4)}

    \pgfmathsetmacro{\qtdMenosTres}{int(\qtd - 3)}
    \pgfmathsetmacro{\qtdMenosDois}{int(\qtd - 2)}
    \pgfmathsetmacro{\qtdMenosUm}{int(\qtd-1)}

    \pgfmathsetmacro{\ang}{\qtdMetadeMaisUm*\espaco+\espacoMetade+\rot}
    \draw[very thick,dashed,black] (\ang:\raio + 1) -- (-90:\raio + 1);

    \foreach \x in {1,...,\qtd} {
        \pgfmathsetmacro{\ang}{\x*\espaco+\rot}
        \node (\x) [black vertex] at (\ang:\raio) {};
    }

    \foreach \x in {1,...,\qtdMetade} {
        \pgfmathsetmacro{\xMaisUm}{int(\x + 1)}
        \pgfmathsetmacro{\fim}{int(\qtd - \x + 1)}
        \foreach \y in {\xMaisUm,...,\fim} {
            \draw[red edge] (\x) -- (\y);
        }
    }

    \foreach \x in {1,...,\qtdMetadeMaisUm} {
        \pgfmathsetmacro{\redDegree}{int(\qtd - \x)}
        \pgfmathsetmacro{\blueDegree}{int(\x - 1)}
        \node (\x_label) [ ] at (\x * \espaco+\rot:\raio + 1) {$v_{\x}$ ({\color{red} $\redDegree$}, {\color{blue} $\blueDegree$})};
    }

    \foreach \x in {\qtdMetadeMaisDois,...,\qtd} {
        \pgfmathsetmacro{\redDegree}{int(\qtd - \x + 1)}
        \pgfmathsetmacro{\blueDegree}{int(\x - 2)}
        \node (\x_label) [ ] at (\x * \espaco+\rot:\raio + 1) {$v_{\x}$ ({\color{red} $\redDegree$}, {\color{blue} $\blueDegree$})};
    }

    \node () [blueconf vertex] at (\qtdMetadeMaisUm * \espaco+\rot:\raio) {};

    \node () [blueconf vertex] at (\qtdMetadeMaisDois * \espaco+\rot:\raio) {};
  
\end{tikzpicture}
 
    \caption{Normal colorings of $E(K_n)$ with sequence $(v_1, \ldots, v_n)$, for $n = 10, 11$.
            The vertices of the conflicting edge are highlighted.
            For better visualization, we omit all blue edges.}
    \label{fig:normal}
\end{figure}
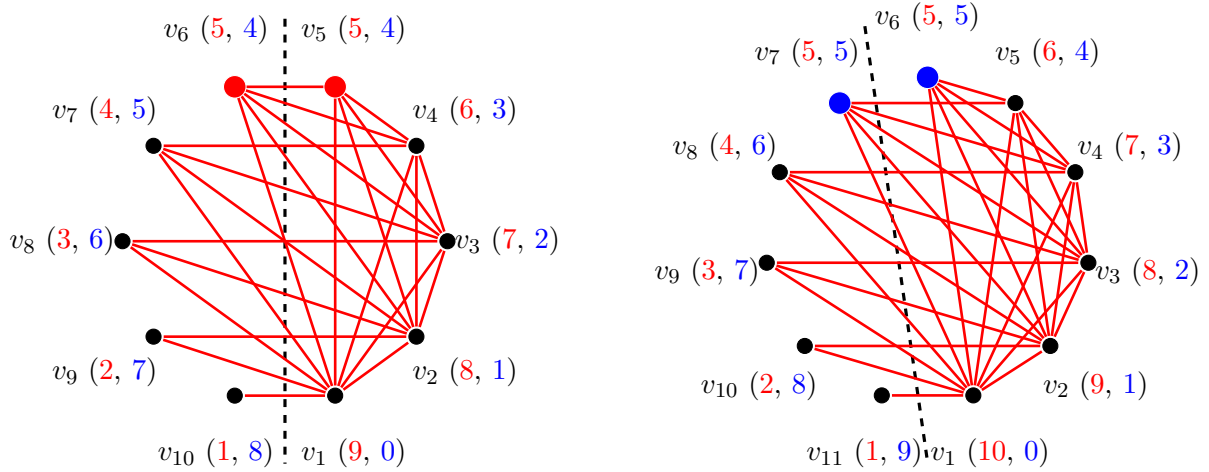%

The following proposition will be useful for proving Lemma~\ref{lemma:splitlarge-1}.

\begin{proposition}%
\label{prop:n-1-distinct-degrees}
    Let $G$ be a connected graph with $V(G) = \{u_1, u_2, \ldots, u_n\}$ and $d_G(u_1) \geq \cdots \geq d_G(u_n)$.
    If $G$ contains only one pair of vertices $u, v$ with $d_G(u) = d_G(v)$, then the following holds:
    \begin{enumerate}[label=\rmlabel]
        \item\label{it:n-1-distinct-degrees-0} $d_G(u) = d_G(v) = \floor{n/2}$;
        \item\label{it:n-1-distinct-degrees-1} $u = u_{\ceil{n/2}}$ and $v = u_{\ceil{n/2} + 1}$; 
        \item\label{it:n-1-distinct-degrees-2} $X = \{u_1, \ldots, u_{\ceil{n/2} - 1}\}$ is a clique and $Y = \{u_{\ceil{n/2} + 2}, \ldots, u_n\}$ is a stable set;
        \item\label{it:n-1-distinct-degrees-3} $X \subseteq N_G(u) \cap N_G(v)$;
        \item\label{it:n-1-distinct-degrees-4} $(N_G(u) \cup N_G(v)) \cap Y = \emptyset$;
        \item\label{it:n-1-distinct-degrees-5} $uv \in E(G)$ if and only if $n$ is even.
    \end{enumerate}
\end{proposition}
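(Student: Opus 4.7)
The plan is to prove the proposition by induction on $n$, after first pinning down the degree sequence. Since $G$ is connected with $n \geq 2$ vertices, every degree lies in $\{1, 2, \ldots, n-1\}$, so having only one repeated pair forces the degree multiset to be exactly $\{1, 2, \ldots, n-1\} \cup \{k\}$ for some $k \in \{1, \ldots, n-1\}$. In particular $d_G(u_1) = n - 1$, so $u_1$ is adjacent to every other vertex, and $d_G(u_n) = 1$, whose unique neighbor must therefore be $u_1$. The base cases $n = 2$ and $n = 3$ are handled directly: the only candidates are $K_2$ and $P_3$, both of which satisfy (i)--(vi) trivially.

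For the inductive step ($n \geq 4$), I would first rule out the extreme choices $k = n - 1$ (a second universal vertex would contradict $d_G(u_n) = 1$) and $k = 1$ (then $u_2$ would have degree $n - 2$ but only $n - 3$ available neighbors, since both $u_{n-1}$ and $u_n$ would be attached solely to $u_1$). Hence $d_G(u_2) = n - 2$ and $u_2$ is adjacent to every vertex except $u_n$. Now consider $G' := G - \{u_1, u_n\}$: each surviving vertex loses exactly one neighbor (namely $u_1$, since $u_n$ was incident only to $u_1$), so $G'$ has $n - 2$ vertices with degree multiset $\{1, \ldots, n-3\} \cup \{k-1\}$, and $u_2$ is universal in $G'$, whence $G'$ is connected with still a single repeated pair. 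The induction hypothesis applied to $G'$ yields $k - 1 = \floor{(n-2)/2}$, so $k = \floor{n/2}$, establishing (i); claim (ii) then follows because in the sorted sequence $k$ occupies positions $n - k$ and $n - k + 1$, i.e.\ $\ceil{n/2}$ and $\ceil{n/2} + 1$.

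The remaining conclusions transfer from $G'$ to $G$ painlessly using the index shift $\ceil{(n-2)/2} = \ceil{n/2} - 1$. The clique of $G'$ produced by the induction hypothesis is $\{u_2, \ldots, u_{\ceil{n/2} - 1}\}$, and adjoining $u_1$ yields the clique $X$ of (iii); the stable set $\{u_{\ceil{n/2} + 2}, \ldots, u_{n-1}\}$ of $G'$, augmented by the pendant $u_n$, yields $Y$. For (iv), $u_1$ is universal and so lies in $N_G(u) \cap N_G(v)$ trivially, while the rest of $X$ does by the induction hypothesis; for (v), $u_n$ is not adjacent to $u$ or $v$ since its only neighbor is $u_1$, and the rest of $Y$ avoids $N(u) \cup N(v)$ by induction. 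Finally (vi) holds because $uv \in E(G)$ iff $uv \in E(G')$, which by induction holds iff $n - 2$ is even, iff $n$ is even.

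The main obstacle, and the source of the only real bookkeeping, is keeping the reduced graph inside the inductive framework: one must ensure that $G'$ remains connected and still exhibits exactly one repeated pair of degrees. This is exactly what excluding $k \in \{1, n-1\}$ accomplishes, and the degenerate small cases ($n \leq 3$), where these exclusions can fail, are absorbed into the base of the induction.
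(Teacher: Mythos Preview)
Your proof is correct and follows essentially the same inductive strategy as the paper: peel off the universal vertex $u_1$ and the pendant $u_n$, verify that $G' = G - \{u_1, u_n\}$ still satisfies the hypotheses, and transfer all six conclusions back via the index shift $\ceil{(n-2)/2} = \ceil{n/2} - 1$. The only difference is in how the side conditions on $G'$ are checked: the paper rules out $k \in \{1, n-1\}$ by observing that $G'$ would then have all distinct degrees, contradicting pigeonhole on a nontrivial component, and shows connectivity of $G'$ by a similar component-counting argument; you instead exclude $k = n-1$ and $k = 1$ by direct degree counts and obtain connectivity of $G'$ from the universality of $u_2$. Both routes are equally short, and yours is arguably cleaner.
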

\begin{proof}
    The proof follows by induction on $n$.
    If $n = 2$, then $G \simeq K_2$, and if $n = 3$, then $G \simeq P_3$.
    In both cases,~\ref{it:n-1-distinct-degrees-0}-\ref{it:n-1-distinct-degrees-5} hold.
    Thus, we may assume that $n \geq 4$.

    Since $G$ is a connected graph with $n$ vertices and $u$ and $v$ are the only vertices of $G$ with the same degree, there are $n - 1$ distinct values of degrees in $G$.
    Moreover, since $G$ is connected, for any vertex $w$ of $G$ we have $1 \leq d_G(w) \leq n - 1$, and as a result of this, we know that the set of degrees of all vertices in $G$ is $\{1, 2, \ldots, n - 1\}$.
    Therefore, $d_G(u_1) = n - 1$ and $d_G(u_n) = 1$.
    Let $G' = G - \{u_1, u_n\}$.
    Note that $d_{G'}(w) = d_G(w) - 1$ for all $w \in V(G) \setminus \{u_1, u_n\}$.
    
    We will show that $G'$ is a connected graph with only one pair of vertices with the same degree.
    If $d_{G'}(u) \in \{1,n - 1\}$, then the vertices of $G'$ have distinct degrees.
    In particular, there exists a non-trivial component of $G'$ where all vertices have distinct degrees, which is an absurd.
    Thus, we may assume that $d_{G'}(u) \notin \{1,n - 1\}$, and hence $G'$ has precisely two vertices with the same degree.
    Now note that graph $G'$ has no trivial components, since $u_n$ is the only vertex of $G$ with degree $1$.
    Also, if $G'$ had more than one component, then it would contain a component where all vertices have distinct degrees, which is an absurd.
    Therefore, the graph $G'$ is connected and contains precisely two vertices with the same degree ($u$ and $v$), and hence, by induction hypothesis, \ref{it:n-1-distinct-degrees-0}-\ref{it:n-1-distinct-degrees-5} hold for $G'$.

    For clarity, let $V(G') = \{u'_1, u'_2, \ldots, u'_{n'}\} = \{u_2, u_3, \ldots, u_{n-1}\}$.
    Since~\ref{it:n-1-distinct-degrees-0} holds for $G'$, and $d_{G'}(w) = d_G(w) - 1$ for all $w \in V(G) \setminus \{u_1, u_n\}$, we have $d_G(u) = d_G(v) = d_{G'}(u)+1 = \floor{n'/2} = \floor{(n-2)/2} + 1 = \floor{n/2}$.
    Thus, \ref{it:n-1-distinct-degrees-0} holds for $G$.
    Since~\ref{it:n-1-distinct-degrees-1} holds for $G'$, the vertices $u = u'_{\ceil{n'/2}} = u_{\ceil{(n - 2)/2}+1} = u_{\ceil{n/2}}$ and $v = u'_{\ceil{n'/2}+1} = u_{(\ceil{(n - 2)/2} + 1)+1} = u_{\ceil{n/2} + 1}$ have the same degree in $G'$, and consequently in $G$, and hence~\ref{it:n-1-distinct-degrees-1} holds for $G$. 
    Since~\ref{it:n-1-distinct-degrees-2} holds for $G'$, the set $X' = \{u_2, \ldots, u_{\ceil{n / 2} - 1}\}$ is a clique of $G'$, the set $Y' = \{u_{\ceil{n/2} + 2}, \ldots, u_{n - 1}\}$ is a stable set of $G'$, and hence $X = \{u_1\} \cup X'$ is a clique of $G$ and $Y = \{u_n\} \cup Y'$ is a stable set of $G$, from where we conclude that~\ref{it:n-1-distinct-degrees-2} holds for $G$.
    Since~\ref{it:n-1-distinct-degrees-3} holds for $G'$, and $X' \subseteq N_{G'}(u) \cap N_{G'}(v)$, and since $d_G(u_1) = n-1$, we have $X \subseteq N_G(u)$ and $X \subseteq N_G(v)$.
    Therefore, \ref{it:n-1-distinct-degrees-3} holds for $G$. 
    Since~\ref{it:n-1-distinct-degrees-4} holds for $G'$, and $(N_{G'}(u) \cup N_{G'}(v)) \cap Y' = \emptyset$, and since $u_n$ has degree $1$ in $G$ and $u_1u_n \in E(G)$, we have $N_{G}(u) \cap Y = \emptyset$ and $N_{G}(v) \cap Y = \emptyset$.
    Therefore, \ref{it:n-1-distinct-degrees-4} holds for $G$. 
    Finally, since~\ref{it:n-1-distinct-degrees-5} holds for $G'$, and $G'$ has $n - 2$ vertices,~\ref{it:n-1-distinct-degrees-5} holds for $G$, which finishes the proof.
\end{proof}%

\begin{lemma}%
\label{lemma:splitlarge-1}
    Let $G(X,Y)$ be a split graph with $X = \{v_1, \ldots, v_n\}$ where $d_1 \geq \cdots \geq d_n$ and $n \geq 4$.
    If $d_1 < \floor{n/2}$ and $d_2 = 0$, then $\chii(G) = 3$.
\end{lemma}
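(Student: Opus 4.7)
Since $d_2 = 0$ and $d_2 \ge d_3 \ge \cdots \ge d_n \ge 0$, we have $d_i = 0$ for every $i \ge 2$, so $G$ consists of $K_n$ on $X$ together with a star of $d_1$ pendant edges at $v_1$ (and, possibly, some isolated vertices in $Y$). I split the argument into the upper bound $\chii(G) \le 3$ and the matching lower bound $\chii(G) \ge 3$.

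\textbf{Upper bound.} If $d_1 = 0$, then $G$ is just $K_n$ together with isolated vertices and the bound $\chii(K_n) \le 3$ for $n \ge 4$ from~\cite{BaBePrWo15} suffices. For $d_1 \ge 1$, I take the normal $2$-edge coloring of $K_n$ (Definition~\ref{def:normal}) for a sequence $\vec V = (w_1, \ldots, w_n)$ with $w_{\ceil{n/2}} = v_1$, then promote the unique conflicting edge $w_{\ceil{n/2}} w_{\ceil{n/2}+1}$ to a third color $\green$ and assign color $\green$ to every pendant edge at $v_1$. The $\green$ subgraph is a star at $v_1$ with $d_1 + 1 \ge 2$ leaves of degree $1$ and centre of degree $d_1 + 1$, hence locally irregular. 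The color class in which the original conflict lay ($\red$ if $n$ is even, $\blue$ if $n$ is odd) now has the two previously-conflicting vertices sharing a common degree with exactly one extra vertex ($w_{\ceil{n/2}+2}$ or $w_{\ceil{n/2}-1}$ respectively), but a direct check from Definition~\ref{def:normal} shows that these three vertices are pairwise non-adjacent in that color (two of the three pairs lie inside $X_2$ or are the recolored green edge, and the third pair is excluded by the index range in item (iii) of the definition). The other color class is untouched and was already locally irregular in the normal coloring.

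\textbf{Lower bound.} Suppose for contradiction that $\phi \colon E(G) \to \{\red, \blue\}$ is a locally irregular $2$-edge coloring, and write $d^R_i$, $d^B_i = (n-1) - d^R_i$ for the red and blue degrees of $v_i$ in $K_n$. Because any two vertices $v_i, v_j$ with $i, j \ge 2$ are adjacent in $K_n$ and have no neighbors in $Y$, their red degrees must differ (otherwise $\phi$ would have a conflicting edge in whichever color $v_iv_j$ was painted). Therefore $S := \{d^R_2, \ldots, d^R_n\}$ is an $(n-1)$-subset of $\{0, 1, \ldots, n-1\}$. An immediate incidence argument shows that $\{0, n-1\} \not\subseteq S$ (a vertex of red degree $0$ and one of red degree $n-1$ would share an edge that cannot be consistently colored), so exactly one of $0$ or $n-1$ is missing from $S$, leaving two symmetric cases. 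If $n-1 \in S$, then $S = \{1, \ldots, n-1\}$ and $G_\red[X]$ is connected with precisely one pair $\{v_1, v_j\}$ of equal-degree vertices, so Proposition~\ref{prop:n-1-distinct-degrees} forces $d^R_1 = d^R_j = \floor{n/2}$ and pins down $v_1$'s red neighborhood in $X$ completely. Letting $r$ denote the number of red pendant edges at $v_1$, the hypothesis $d_1 < \floor{n/2}$ forces $d_{G_\red}(v_1) = \floor{n/2} + r$ to lie in the interval of red degrees of $v_1$'s red neighbors in $X$ for every admissible $r$ (with $r = 0$ colliding with $d^R_j$ when $v_1 v_j$ is red); if $r = 0$ is the only escape from a red conflict (i.e.\ $v_1 v_j$ is blue, which by item (v) of the proposition happens iff $n$ is odd), a dual computation shows $d_{G_\blue}(v_1) = d^B_1 + d_1$ lands in the blue-degree interval of $v_1$'s blue neighbors in $X$. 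The complementary case $0 \in S$, $n-1 \notin S$ is handled symmetrically by applying Proposition~\ref{prop:n-1-distinct-degrees} to the connected graph $G_\blue[X]$.

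\textbf{Main obstacle.} The crux is the lower-bound bookkeeping: Proposition~\ref{prop:n-1-distinct-degrees} forces $v_1$'s monochromatic neighborhoods to have degrees that form an interval containing $d^R_1$ or $d^B_1$, and the hypothesis $d_1 < \floor{n/2}$ is precisely what is needed to guarantee that no shift $r \in \{0, 1, \ldots, d_1\}$ of $v_1$'s degree by pendant edges can leave this interval. The parity of $n$ determines whether $v_1 v_j$ is red or blue, which in turn dictates which color is responsible for the conflict in each sub-case; running the two parities in parallel makes the write-up notationally heavy but not conceptually subtle.
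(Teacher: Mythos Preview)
Your proposal is correct and follows essentially the same strategy as the paper. Both arguments build the upper bound from a normal coloring of $K_n$ with $v_1$ placed at position $\lceil n/2\rceil$, then recolor the unique conflicting edge together with the pendant edges at $v_1$ using a third color; the paper handles $d_1=0$ by explicitly greening one additional edge to form a $P_3$, whereas you invoke the $K_n$ result from~\cite{BaBePrWo15}, but this is a cosmetic difference. For the lower bound, both proofs reduce to Proposition~\ref{prop:n-1-distinct-degrees}: the paper argues that $H_\red$ and $H_\blue$ each have a unique repeated-degree pair containing $v_1$, takes the component $K$ (of size $n'\in\{n-1,n\}$) through the conflicting edge, and reads off the contradiction from $d_1<\lfloor n/2\rfloor$; your dichotomy on whether $0$ or $n-1$ is missing from $S=\{d^R_2,\dots,d^R_n\}$ achieves the same end by guaranteeing that one of $G_\red[X]$, $G_\blue[X]$ is connected on all $n$ vertices, after which the parity of $n$ (via item~(v) of the proposition) plays the role that the parity of $n'$ plays in the paper.
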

\begin{proof}
    Let $G(X,Y)$ be a split graph with $X = \{v_1, \ldots, v_n\}$, $d_1 \geq \cdots \geq d_n$, $n \geq 4$, $d_1 < \floor{n/2}$, and $d_2 = 0$.
    We start by proving that $\chii(G) \geq 3$, and then we exhibit a coloring showing that $\chii(G) \leq 3$.

    \begin{claim}
    \label{claim:splitlarge-1:chimaior}
        $\chii(G) \geq 3$.
    \end{claim}
    \begin{claimproof}
        Since $d_1 \geq \cdots d_n \geq 0$, $n \geq 4$, and $d_2 = 0$, then by Fact~\ref{fact:splitlarge-0} we have $\chii(G) \geq 2$.
        Towards a contradiction, suppose that $\chii(G) = 2$, and let $\phi \colon E(G) \to \{\red,\blue\}$ be a locally irregular $2$-edge coloring of $G$.
        Let $H_\red = G_{\red,\phi}[X]$ and $H_\blue = G_{\blue,\phi}[X]$.

        Suppose that there exists a pair of vertices $v_x$ and $v_y$ such that $d_{H_\red}(v_x) = d_{H_\red}(v_y)$ and $2 \leq x < y$.
        Since $G_\red$ is locally irregular and
        \begin{equation*}
            d_{G_\red}(v_x) = d_{H_\red}(v_x) = d_{H_\red}(v_y) = d_{G_\red}(v_y) \enspace ,
        \end{equation*}
        $\phi(v_xv_y)$ is blue, but
        \begin{equation*}
            d_{G_\blue}(v_x) = n-1 - d_{G_\red}(v_x) = n-1 - d_{G_\red}(v_y) = d_{G_{\blue}}(v_y) \enspace,
        \end{equation*}
        a contradiction to the fact that $G_\blue$ is locally irregular.
        Therefore, for every pair of vertices~$v_x$ and~$v_y$ with $2 \leq x < y$, we have $d_{H_\red}(v_x) \neq d_{H_\red}(v_y)$.
        As a result, if $H_\red$ contains a pair of vertices of same degree, then it is unique and one of them must be $v_1$.
        By the Pigeonhole Principle, every connected graph with at least two vertices has at least one pair of vertices with the same degree.
        Thus $H_\red$ can have at most one trivial component and, since $n \geq 4$, it has precisely one non-trivial component.
        Similarly, $H_\blue$ has precisely one pair of vertices of same degree, one of these vertices being $v_1$, and it has precisely one non-trivial component and at most one trivial one.

        Let $v_x$ and $v_y$ be the vertices with the same degree as $v_1$ in the color red and blue, respectively.
        Note that if two vertices $v_w \neq v_1$ have the same degree in the color red, then they also have the same degree in the color blue and vice-versa, and since both $H_\red$ and $H_\blue$ have only one pair of vertices with the same degree, $v_x = v_y$.
        This means that the edge $v_1v_x$ is conflicting in $H_{\phi(v_1v_x)}$.
        Therefore, we must have $d_1 \geq 1$, as otherwise $G_\blue$ and $G_\red$ would not be locally irregular. 

        Suppose, without loss of generality, that $\phi(v_1v_x)$ is red.
        Let $K$ be the component of $H_\red$ containing the edge $v_1v_x$.
        Since $H_\red$ contains precisely one non-trivial component and at most one trivial component, $K$ has $n'$ vertices where $n' \geq n - 1$.
        By Proposition~\ref{prop:n-1-distinct-degrees}, we have that $d_K(v_1) = \floor{n'/2}$ and, since the edge $v_1v_x$ exists in $K$, we know that $n'$ is even.
        Moreover, if $X'$ is the set of vertices with degree at least $n'/2 + 1$ in $K$, then $X' \subset X$, $|X'| = n'/2 - 1$ and $X' \subseteq N_K(v_1)$ also by Proposition~\ref{prop:n-1-distinct-degrees}.
        Since 
        \begin{equation*}
            d_{G_\red}(v_1) = d_{H_\red}(v_1) + d_1 \enspace ,
        \end{equation*}
        on one hand we have
        \begin{equation*}
            d_{G_\red}(v_1) \geq n'/2 + 1 \enspace ,
        \end{equation*}
        and, on the other hand,
        \begin{equation*}
            d_{G_\red}(v_1) < n'/2 + \floor{n/2} = n' \enspace .
        \end{equation*}
        But since $d_{G_\red}(w) = d_K(w)$ for any $w \in X'$, this means that $v_1$ has the same degree in $G_\red$ as some vertex of $X'$, a contradiction to the fact that $G_\red$ is locally irregular.
    \end{claimproof}

    \begin{claim}
    \label{claim:splitlarge-1:chimenor}
        $\chii(G) \leq 3$.
    \end{claim}
    \begin{claimproof}
        Let $\phi$ be a normal coloring for the sequence $(v_2, \ldots, v_{\ceil{n/2}}, v_1, v_{\ceil{n/2}+1}, \ldots, v_n)$.
        Consider the coloring $\phi' \colon E(G) \to \{\red,\blue,\green\}$ defined as follows: $\phi'(v_1y) = \green$ for all $y \in Y$, $\phi'(v_1v_{\ceil{n/2}+1}) = \green$, and any other edge $e$ of $G$ has $\phi'(e) = \phi(e)$.
        If $d_1 = 0$, then we also do $\phi'(v_1v_{\ceil{n/2}}) = \green$ if $n$ is even or $\phi'(v_{\ceil{n/2}+1}v_{\ceil{n/2}+2}) = \green$ otherwise.
        See Figure~\ref{fig:normal-3} for examples of $\phi'$ with $n=8$ and $n=11$.

        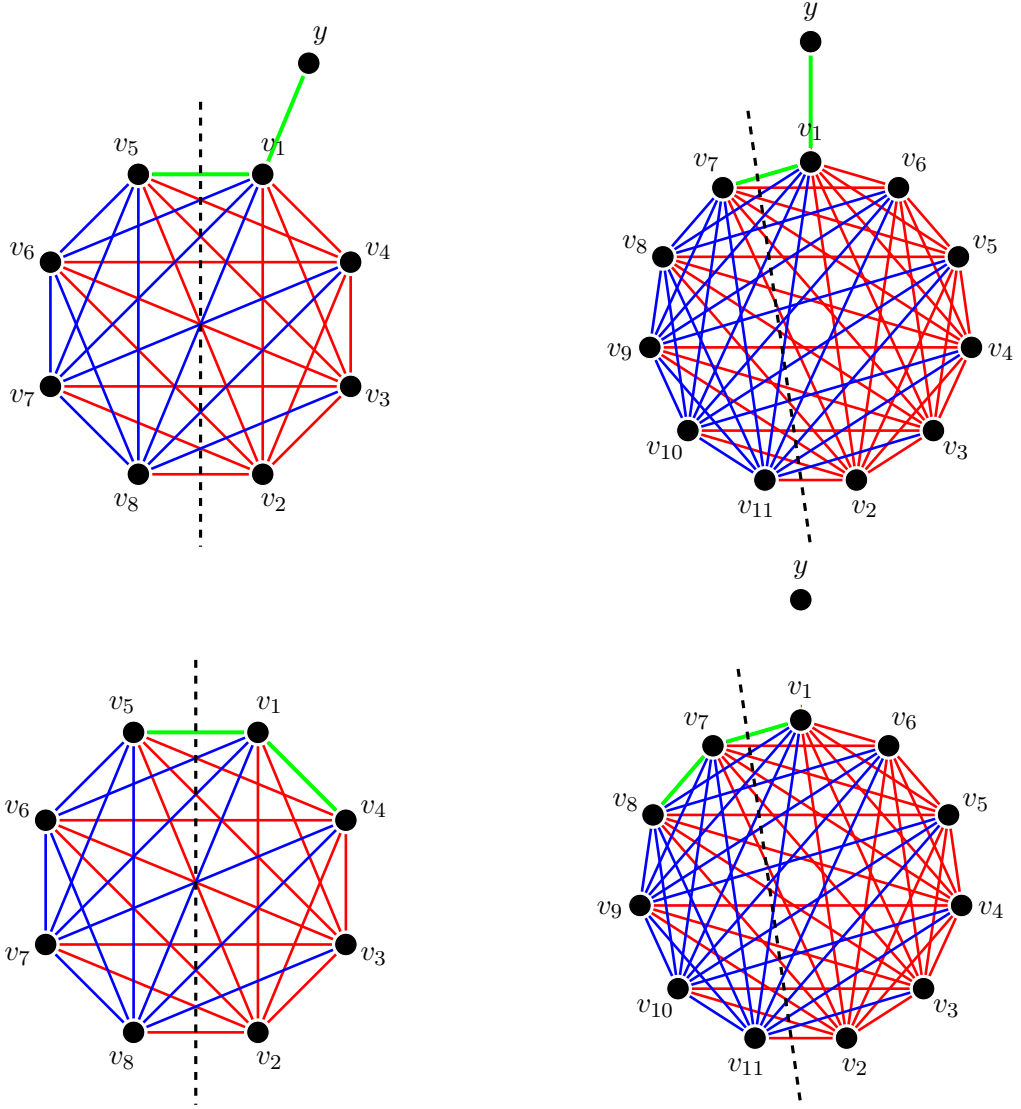
\begin{figure}
            \hfill
\begin{tikzpicture}[scale=0.8]

    \tikzset{black vertex/.style={circle,minimum size=3mm,inner sep=0pt,outer sep=1pt,fill=black,draw=white}}

    \pgfmathsetmacro{\qtd}{int(8)}
    \pgfmathsetmacro{\raio}{2.7}
    \pgfmathsetmacro{\espaco}{360/\qtd}
    \pgfmathsetmacro{\espacoMetade}{360/(\qtd*2)}    
    \pgfmathsetmacro{\rot}{-\espaco - 90 + \espacoMetade}
    \pgfmathsetmacro{\qtdMetade}{int(\qtd/2)}
    \pgfmathsetmacro{\qtdMetadeMaisUm}{int(\qtd/2 + 1)}
    \pgfmathsetmacro{\qtdMetadeMenosUm}{int(\qtd/2 - 1)}

    \tikzstyle{red edge}=[line width=1, red]
    \tikzstyle{blue edge}=[line width=1, blue]
    \tikzstyle{green edge}=[line width=1.5, green]

    \foreach \x in {1,...,\qtd} {
        \pgfmathsetmacro{\ang}{\x*\espaco+\rot}
        \node (\x) [black vertex] at (\ang:\raio) {};
    }
    \node (y) [black vertex] at (\qtdMetade*\espaco+\rot:\raio+2) {};

    \foreach \x in {1,...,\qtdMetade} {
        \pgfmathsetmacro{\ini}{int(\x + 1)}
        \pgfmathsetmacro{\fim}{int(\qtd - \ini + 2)}
        \foreach \y in {\ini,...,\fim} {
            \draw[red edge] (\x) -- (\y);
        }
    }

    \foreach \x in {\qtdMetadeMaisUm,...,\qtd} {
        \pgfmathsetmacro{\fim}{int(\x - 1)}
        \pgfmathsetmacro{\ini}{int(\qtd - \x + 2)}
        \foreach \y in {\ini,...,\fim}
            \draw[blue edge] (\x) -- (\y);
    }

    \draw[green edge] (\qtdMetade) -- (\qtdMetadeMaisUm);
    \draw[green edge] (\qtdMetade) -- (y);

    \foreach \x in {1,...,\qtdMetadeMenosUm} {
        \pgfmathsetmacro{\xMaisUm}{int(\x + 1)}
        \node (\x_label) [] at (\x*\espaco+\rot:\raio + 0.5) {$v_{\xMaisUm}$};
    }
    \node (1_label) [] at (\qtdMetade*\espaco+\rot:\raio + 0.5) {$v_1$};
    \foreach \x in {\qtdMetadeMaisUm,...,\qtd}
        \node (\x_label) [] at (\x*\espaco+\rot:\raio + 0.5) {$v_{\x}$};
    \node (y_label) [] at (\qtdMetade*\espaco+\rot:\raio+2.5) {$y$};

    \pgfmathsetmacro{\ang}{\qtdMetade*\espaco+\espacoMetade+\rot}
    \draw[very thick,dashed,black] (\ang:\raio + 1) -- (-90:\raio + 1);        
\end{tikzpicture}
             \hfill
\begin{tikzpicture}[scale=0.8]

    \tikzset{black vertex/.style={circle,minimum size=3mm,inner sep=0pt,outer sep=1pt,fill=black,draw=white}}

    \pgfmathsetmacro{\qtd}{int(11)}
    \pgfmathsetmacro{\raio}{2.7}
    \pgfmathsetmacro{\espaco}{360/\qtd}
    \pgfmathsetmacro{\espacoMetade}{360/(\qtd*2)}    
    \pgfmathsetmacro{\rot}{-\espaco - 90 + \espacoMetade}
    \pgfmathsetmacro{\qtdMetade}{int(\qtd/2)}
    \pgfmathsetmacro{\qtdMetadeMaisUm}{int(\qtd/2 + 1)}
    \pgfmathsetmacro{\qtdMetadeMaisDois}{int(\qtd/2 + 2)}
    \pgfmathsetmacro{\qtdMetadeMenosUm}{int(\qtd/2 - 1)}

    \tikzstyle{red edge}=[line width=1, red]
    \tikzstyle{blue edge}=[line width=1, blue]
    \tikzstyle{green edge}=[line width=1.5, green]

    \foreach \x in {1,...,\qtd} {
        \pgfmathsetmacro{\ang}{\x*\espaco+\rot}
        \node (\x) [black vertex] at (\ang:\raio) {};
    }
    \node (y) [black vertex] at (\qtdMetadeMaisUm*\espaco+\rot:\raio+2) {};

    \foreach \x in {1,...,\qtdMetadeMaisUm} {
        \pgfmathsetmacro{\ini}{int(\x + 1)}
        \pgfmathsetmacro{\fim}{int(\qtd - \ini + 2)}
        \foreach \y in {\ini,...,\fim} {
            \draw[red edge] (\x) -- (\y);
        }
    }

    \foreach \x in {\qtdMetadeMaisDois,...,\qtd} {
        \pgfmathsetmacro{\fim}{int(\x - 1)}
        \pgfmathsetmacro{\ini}{int(\qtd - \x + 2)}
        \foreach \y in {\ini,...,\fim}
            \draw[blue edge] (\x) -- (\y);
    }

    \draw[green edge] (\qtdMetadeMaisUm) -- (\qtdMetadeMaisDois);
    \draw[green edge] (\qtdMetadeMaisUm) -- (y);

    \foreach \x in {1,...,\qtdMetade} {
        \pgfmathsetmacro{\xMaisUm}{int(\x + 1)}
        \node (\x_label) [] at (\x*\espaco+\rot:\raio + 0.5) {$v_{\xMaisUm}$};
    }
    \node (1_label) [] at (\qtdMetadeMaisUm*\espaco+\rot:\raio + 0.5) {$v_1$};
    \foreach \x in {\qtdMetadeMaisDois,...,\qtd}
        \node (\x_label) [] at (\x*\espaco+\rot:\raio + 0.5) {$v_{\x}$};
    \node (y_label) [] at (\qtdMetadeMaisUm*\espaco+\rot:\raio+2.5) {$y$};

    \pgfmathsetmacro{\ang}{\qtdMetadeMaisUm*\espaco+\espacoMetade+\rot}
    \draw[very thick,dashed,black] (\ang:\raio + 1) -- (-90:\raio + 1);        
\end{tikzpicture}
             \hfill
            \\
            \hfill
\begin{tikzpicture}[scale=0.8]

    \tikzset{black vertex/.style={circle,minimum size=3mm,inner sep=0pt,outer sep=1pt,fill=black,draw=white}}

    \pgfmathsetmacro{\qtd}{int(8)}
    \pgfmathsetmacro{\raio}{2.7}
    \pgfmathsetmacro{\espaco}{360/\qtd}
    \pgfmathsetmacro{\espacoMetade}{360/(\qtd*2)}    
    \pgfmathsetmacro{\rot}{-\espaco - 90 + \espacoMetade}
    \pgfmathsetmacro{\qtdMetade}{int(\qtd/2)}
    \pgfmathsetmacro{\qtdMetadeMaisUm}{int(\qtd/2 + 1)}
    \pgfmathsetmacro{\qtdMetadeMenosUm}{int(\qtd/2 - 1)}

    \tikzstyle{red edge}=[line width=1, red]
    \tikzstyle{blue edge}=[line width=1, blue]
    \tikzstyle{green edge}=[line width=1.5, green]

    \foreach \x in {1,...,\qtd} {
        \pgfmathsetmacro{\ang}{\x*\espaco+\rot}
        \node (\x) [black vertex] at (\ang:\raio) {};
    }

    \foreach \x in {1,...,\qtdMetade} {
        \pgfmathsetmacro{\ini}{int(\x + 1)}
        \pgfmathsetmacro{\fim}{int(\qtd - \ini + 2)}
        \foreach \y in {\ini,...,\fim} {
            \draw[red edge] (\x) -- (\y);
        }
    }

    \foreach \x in {\qtdMetadeMaisUm,...,\qtd} {
        \pgfmathsetmacro{\fim}{int(\x - 1)}
        \pgfmathsetmacro{\ini}{int(\qtd - \x + 2)}
        \foreach \y in {\ini,...,\fim}
            \draw[blue edge] (\x) -- (\y);
    }

    \draw[green edge] (\qtdMetade) -- (\qtdMetadeMaisUm);
    \draw[green edge] (\qtdMetade) -- (\qtdMetadeMenosUm);

    \foreach \x in {1,...,\qtdMetadeMenosUm} {
        \pgfmathsetmacro{\xMaisUm}{int(\x + 1)}
        \node (\x_label) [] at (\x*\espaco+\rot:\raio + 0.5) {$v_{\xMaisUm}$};
    }
    \node (1_label) [] at (\qtdMetade*\espaco+\rot:\raio + 0.5) {$v_1$};
    \foreach \x in {\qtdMetadeMaisUm,...,\qtd}
        \node (\x_label) [] at (\x*\espaco+\rot:\raio + 0.5) {$v_{\x}$};

    \pgfmathsetmacro{\ang}{\qtdMetade*\espaco+\espacoMetade+\rot}
    \draw[very thick,dashed,black] (\ang:\raio + 1) -- (-90:\raio + 1);        
\end{tikzpicture}
             \hfill
\begin{tikzpicture}[scale=0.8]

    \tikzset{black vertex/.style={circle,minimum size=3mm,inner sep=0pt,outer sep=1pt,fill=black,draw=white}}

    \pgfmathsetmacro{\qtd}{int(11)}
    \pgfmathsetmacro{\raio}{2.7}
    \pgfmathsetmacro{\espaco}{360/\qtd}
    \pgfmathsetmacro{\espacoMetade}{360/(\qtd*2)}    
    \pgfmathsetmacro{\rot}{-\espaco - 90 + \espacoMetade}
    \pgfmathsetmacro{\qtdMetade}{int(\qtd/2)}
    \pgfmathsetmacro{\qtdMetadeMaisUm}{int(\qtd/2 + 1)}
    \pgfmathsetmacro{\qtdMetadeMaisDois}{int(\qtd/2 + 2)}
    \pgfmathsetmacro{\qtdMetadeMaisTres}{int(\qtd/2 + 3)}
    \pgfmathsetmacro{\qtdMetadeMenosUm}{int(\qtd/2 - 1)}

    \tikzstyle{red edge}=[line width=1, red]
    \tikzstyle{blue edge}=[line width=1, blue]
    \tikzstyle{green edge}=[line width=1.5, green]

    \foreach \x in {1,...,\qtd} {
        \pgfmathsetmacro{\ang}{\x*\espaco+\rot}
        \node (\x) [black vertex] at (\ang:\raio) {};
    }
    \node (y) [black vertex] at (\qtdMetadeMaisUm*\espaco+\rot:\raio+2) {};

    \foreach \x in {1,...,\qtdMetadeMaisUm} {
        \pgfmathsetmacro{\ini}{int(\x + 1)}
        \pgfmathsetmacro{\fim}{int(\qtd - \ini + 2)}
        \foreach \y in {\ini,...,\fim} {
            \draw[red edge] (\x) -- (\y);
        }
    }

    \foreach \x in {\qtdMetadeMaisDois,...,\qtd} {
        \pgfmathsetmacro{\fim}{int(\x - 1)}
        \pgfmathsetmacro{\ini}{int(\qtd - \x + 2)}
        \foreach \y in {\ini,...,\fim}
            \draw[blue edge] (\x) -- (\y);
    }

    \draw[green edge] (\qtdMetadeMaisUm) -- (\qtdMetadeMaisDois);
    \draw[green edge] (\qtdMetadeMaisDois) -- (\qtdMetadeMaisTres);

    \foreach \x in {1,...,\qtdMetade} {
        \pgfmathsetmacro{\xMaisUm}{int(\x + 1)}
        \node (\x_label) [] at (\x*\espaco+\rot:\raio + 0.5) {$v_{\xMaisUm}$};
    }
    \node (1_label) [] at (\qtdMetadeMaisUm*\espaco+\rot:\raio + 0.5) {$v_1$};
    \foreach \x in {\qtdMetadeMaisDois,...,\qtd}
        \node (\x_label) [] at (\x*\espaco+\rot:\raio + 0.5) {$v_{\x}$};
    \node (y_label) [] at (\qtdMetadeMaisUm*\espaco+\rot:\raio+2.5) {$y$};

    \pgfmathsetmacro{\ang}{\qtdMetadeMaisUm*\espaco+\espacoMetade+\rot}
    \draw[very thick,dashed,black] (\ang:\raio + 1) -- (-90:\raio + 1);        
\end{tikzpicture}
             \hfill
            \caption{Coloring $\phi'$ described in the proof of Claim~\ref{claim:splitlarge-1:chimenor} for $n=8,11$ when $d_1 = 0$ and $d_1 \neq 0$.}
            \label{fig:normal-3}
        \end{figure}

        Recall that the edge $v_1 v_{\ceil{n/2}+1}$ is the only one conflicting in $G_{\red,\phi}$ if $n$ is even, or in $G_{\blue,\phi}$ otherwise.
        In $\phi'$, such edge has color green and clearly there is no conflicting edge in $G_{\green,\phi}$.
        
        The degrees of $v_1$ and $v_{\ceil{n/2}+1}$ have decreased by one from $G_{\gamma,\phi}$ to $G_{\gamma,\phi'}$, for $\gamma = \phi(v_1v_{\ceil{n/2}+1})$.
        If $\gamma = \red$, then $v_{\ceil{n/2}+2}$ is the only vertex of $G$ with the same degree as $v_1$ and $v_{\ceil{n/2}+1}$ in $G_{\red,\phi'}$.
        However, $v_{\ceil{n/2}+2}$ is not a neighbor of $v_1$ or $v_{\ceil{n/2}+1}$ in $G_{\red,\phi'}$.
        Otherwise, if $\gamma = \blue$, then $v_{\ceil{n/2}}$ is the only vertex of $G$ with the same degree as $v_1$ and $v_{\ceil{n/2}+1}$ in $G_{\blue,\phi'}$.
        Likewise, $v_{\ceil{n/2}}$ is not a neighbor of $v_1$ or $v_{\ceil{n/2}+1}$ in $G_{\blue,\phi'}$.

        If $d_1 = 0$ and $n$ is even, then the degree of $v_{\ceil{n/2}}$ has also decreased by one from $G_{\red,\phi}$ to $G_{\red,\phi'}$.
        However, $d_{G_{\red,\phi'}}(v_{\ceil{n/2}}) = \ceil{n/2} = d_{G_{\red,\phi}}(v_1)$ and so it is the only vertex with such degree in $G_{\red,\phi'}$.
        If $d_1 = 0$ and $n$ is odd, then the degree of $v_{\ceil{n/2}+2}$ has decreased by one from $G_{\blue,\phi}$ to $G_{\blue,\phi'}$.
        Similarly, $d_{G_{\blue,\phi'}}(v_{\ceil{n/2}+2}) = \ceil{n/2}-1 = d_{G_{\blue,\phi}}(v_1)$ and so it is the only vertex with such degree in $G_{\blue,\phi'}$.

        Therefore, we conclude that $\phi'$ is a locally irregular 3-edge coloring of $G$, which implies $\chii(G)\leq 3$.
    \end{claimproof}

    Claims~\ref{claim:splitlarge-1:chimaior} and~\ref{claim:splitlarge-1:chimenor} conclude the proof of Lemma~\ref{lemma:splitlarge-1}.
\end{proof}%

In the proof of Lemma~\ref{lemma:splitlarge-2} below the reader may find it useful to refer to Figure~\ref{fig:normal}.

\begin{lemma}%
\label{lemma:splitlarge-2}
    Let $G(X,Y)$ be a split graph with $X = \{v_1, \ldots, v_n\}$ where $d_1 \geq \cdots \geq d_n$ and $d_{\floor{n/2}} \geq 1$.
    If $n \geq 3$, then, $\chii(G) \leq 2$.
\end{lemma}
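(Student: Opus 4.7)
The plan is to construct an explicit locally irregular $2$-edge coloring of $G$ by extending a normal coloring of the clique $G[X] = K_n$. I first pick a sequence $\vec V$ of $V(X)$ so that the normal coloring's conflicting pair (at positions $\ceil{n/2}$ and $\ceil{n/2}+1$) contains at least one vertex $v_i$ with $d_i \geq 1$. The hypothesis $d_{\floor{n/2}} \geq 1$ makes this possible: for $n$ even the identity sequence works (since $v_{n/2}$ sits at position $n/2$ and $d_{n/2}\geq 1$), and for $n$ odd a swap of two central positions brings a vertex with $d \geq 1$ into a central slot. Let $\phi$ be the resulting normal coloring; by the discussion following Definition~\ref{def:normal}, $\phi$ leaves exactly one conflicting edge $e^*$ in the ``bad'' color (red if $n$ is even, blue if $n$ is odd), and is locally irregular on $G[X]$ otherwise.

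Next I extend $\phi$ to $\phi'\colon E(G)\to\{\red,\blue\}$ by coloring the $X$-$Y$ edges according to the half of $\vec V$ in which their $X$-endpoint lies: for $n$ even, bad color for first-half vertices and good color for second-half vertices; for $n$ odd the roles are swapped. The purpose is to force the bad-color degrees in $G$ to be strictly monotone along $\vec V$. For instance when $n$ is even, the bad-color degree of the $i$-th vertex of $\vec V$ in $G$ becomes $(n-i) + d_{\vec V(i)}$ for $i \leq \ceil{n/2}$ and $n-i+1$ for $i > \ceil{n/2}$. The first-half values are strictly decreasing because $n-i$ drops by one while $d_{\vec V(i)}$ is non-increasing along $\vec V$; the transition across the halves is strict because $d_{\vec V(\ceil{n/2})}\geq 1$; and the second-half values drop by exactly one at each step. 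Thus $e^*$ is resolved and all clique vertices are pairwise separated in the bad color.

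It remains to verify that no new conflicts appear. For bad-color $X$-$Y$ pairs, any $y \in Y$ has at most $\ceil{n/2}$ bad-color neighbors, while each such neighbor $v_{\vec V(i)}$ has bad-color degree in $G$ at least $\ceil{n/2}+1$ by the previous monotonicity, ruling out $X$-$Y$ conflicts in the bad color. An analogous counting argument handles good-color $X$-$Y$ pairs. Good-color conflicts inside $G[X]$ that cross the two halves of $\vec V$ are prevented because the nested structure of the normal coloring forces any such good-color adjacent pair to have positions differing by at least two, which is enough to separate their degrees after the $d$-shift is added.

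The main obstacle is good-color conflicts between clique vertices lying entirely in the half of $\vec V$ whose clique is good-color complete: any two such vertices are good-color adjacent, and their good-color degrees $(i-2)+d_{\vec V(i)}$ and $(j-2)+d_{\vec V(j)}$ can collide precisely when the $d$-values descend by exactly one per position. This obstruction is vacuous when every vertex of that half has $d=0$ (which includes the natural case $d_{\floor{n/2}+1}=0$). Otherwise each collision is resolved by swapping the color of a single $X$-$Y$ edge at the lower-indexed colliding vertex, which exists because its $d$ is positive; the swap shifts its good-color degree by one and the strict bad-color monotonicity from the previous step provides enough slack to guarantee that no new bad-color conflict is introduced. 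The case $n$ odd is the most delicate step, because the swap permutation makes the $d$-sequence non-monotone along $\vec V$ at the center; here a short case analysis based on the exact placement of positive $d$-values near the conflicting pair (possibly adjusting the color of one additional $X$-$Y$ edge) completes the construction, and the resulting coloring $\phi'$ is locally irregular, so $\chii(G) \leq 2$.
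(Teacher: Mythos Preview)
Your overall plan---start from a normal coloring of $G[X]$ and extend it to the $X$--$Y$ edges by assigning one color per half---is exactly the paper's strategy. The gap lies in your choice of sequence and the consequent ``fix'' for good-color collisions inside the good-color-complete half $X_2$.

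With the identity sequence for $n$ even, the blue (good-color) degree of $v_i\in X_2$ becomes $(i-2)+d_i$; since $i$ increases while $d_i$ is non-increasing, these values can collide in many ways, and your proposed repair (``swap one $X$--$Y$ edge at the lower-indexed colliding vertex'') can create a new bad-color conflict. Take $n=6$ with $d_1=d_2=d_3=d_4=1$ and $d_5=d_6=0$. Then $v_4$ and $v_5$ are blue-adjacent with blue degree $3$ each, so you swap the unique blue $Y$-edge at $v_4$ to red. Now $d_{G_\red}(v_4)=4=d_{G_\red}(v_3)$, and $v_3v_4$ is precisely the red conflicting edge of the normal coloring, so you have reintroduced a red conflict. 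The ``slack'' in your strict red monotonicity is exactly $1$ between consecutive $X_2$ positions and exactly $d_{n/2}$ at the $X_1$--$X_2$ boundary; a swap at position $n/2+1$ consumes all of it whenever $d_{n/2}=1$. The ``analogous counting argument'' for good-color $X$--$Y$ pairs is also not quite analogous: when the vertex in position $\ceil{n/2}+1$ has $d=1$, its good-color degree is exactly $\floor{n/2}$, which a $Y$-vertex can match. The paper meets this same boundary case and repairs it explicitly; you do not.

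The idea you are missing is the paper's choice of sequence: for $n$ even it uses $(v_1,\ldots,v_{n/2},\,v_n,v_{n-1},\ldots,v_{n/2+1})$, i.e.\ it \emph{reverses the second half}. Then the blue degree at position $j>n/2$ is $(j-2)+d_{3n/2+1-j}$, and both summands are non-decreasing in $j$ (the first strictly), so the $X_2$ blue degrees are automatically pairwise distinct. This eliminates the whole obstacle you are fighting, leaving only the single possible $X$--$Y$ blue conflict at $v_n$, which the paper resolves by changing one or two edge colors.
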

\begin{proof}
    There are two cases to consider depending on the parity of $n$, but the only difference in the proofs is the coloring we give to $E(G[X])$, which are symmetric.
    For $n$ even, we start with a normal coloring $\phi \colon E(G[X]) \to \{\red,\blue\}$ for the sequence $(v_1, \ldots, v_{n/2}, v_n, v_{n-1}, \ldots, v_{n/2 + 1})$.
    In case $n$ is odd we consider a normal coloring of $E(G[X])$ for sequence $(v_{\ceil{n/2}}, \ldots, v_n, v_{\floor{n/2}}, \ldots, v_1)$.
    Thus, for the rest of this proof, we may assume, without loss of generality, that $n$ is even.
    
    Let $X_1 = \{v_1, v_2, \ldots, v_{n/2}\}$ and $X_2 = X \setminus X_1$.
    From~\eqref{eq:normal:samedegree} we know that the only vertices with the same degree in $G_\red[X]$ or $G_\blue[X]$ are $v_{n/2}$ and~$v_n$.

    We will obtain a locally irregular $2$-edge coloring of $G$ from $\phi$.
    We start by extending $\phi$ to a coloring $\phi'$ of $E(G)$ with colors red and blue in the following way.
    For all edges $xy$ between $X_1$ and $Y$ let $\phi'(xy) = \red$, and for all edges $xy$ between $X_2$ and~$Y$ let $\phi'(xy) = \blue$.
    
    Let us first analyze the graph $G_{\red,\phi'}$.
    Since $d_{n/2} \geq 1$, for every vertex $x \in X_1$ we have $d_{G_{\red,\phi'}}(x) > d_{G_{\red,\phi}}(x) \geq n/2$, and since $d_1 \geq \cdots \geq d_{n/2}$, the degree of any two vertices of $X_1$ remain different in $G_{\red,\phi'}$.
    Also, since there are no red edges between $v_n$ and $Y$, we have $d_{G_{\red,\phi'}}(v_n) = n/2 < d_{G_{\red,\phi'}}(x)$ for every $x \in X_1$.
    The red degree of vertices $v_{n/2+1}, \ldots, v_n$ are the same in $\phi$ and $\phi'$, and the red degree of any vertex $y \in Y$ is at most $n/2$, since there are no red edges between $X_2$ and $Y$.    
    Therefore, since the degrees of the vertices of $X_1$ in $G_{\red,\phi'}$ are at least $n/2 + 1$, we conclude that $G_{\red,\phi'}$ is locally irregular.
    
    It remains to show that $G_{\blue,\phi'}$ is locally irregular.
    Since $\phi$ is a normal coloring and $n$ is even, we know that $G_{\blue,\phi'}[X]$ is locally irregular.
    If there is no $y \in Y$ that has a neighbor $x \in X_2$ with $d_{G_{\blue,\phi'}}(y) = d_{G_{\blue,\phi'}}(x)$, then the result follows.
    Thus we may assume the opposite, i.e., there exist $y \in Y$ and $x \in X_2$ with the same degree in $G_{\blue,\phi'}$.
    Since the maximum possible degree of a vertex of $Y$ in $G_{\blue,\phi'}$ is $n/2$ and the minimum degree of a vertex of $X_2$ in $G_{\blue,\phi'}[X]$ is $n/2 - 1$, we conclude that 
    \begin{equation*}    
        d_{G_{\blue,\phi'}}(y) = d_{G_{\blue,\phi'}}(v_n) = n/2.
    \end{equation*}
    Therefore, because of the pair $y,v_n$, the graph $G_{\blue,\phi'}$ is not locally irregular.
    In this case, we can change the color of one or two edges in $\phi'$ to obtain a locally irregular $2$-edge coloring $\phi''$ for $G$, as we explain next.
    
    If $d_{n/2} \geq 2$, then let $\phi''$ be the coloring obtained from $\phi'$ by changing the color of $yv_{n}$ from blue to red.
    We claim that the graphs $G_{\red,\phi''}$ and $G_{\blue,\phi''}$ are locally irregular.
    In fact, this holds since $v_n$ has degree $n/2+1$ in $G_{\red,\phi''}$ and every vertex in $X_1$ has degree at least $n/2+2$ in~$G_{\red,\phi''}$.
    
    Now assume that $d_{n/2}= 1$ and let $z \in Y$ be the only neighbor of $v_{n/2}$ in $Y$.
    In this case consider the coloring $\phi''$ obtained from $\phi'$ by changing the color of $yv_{n}$ from blue to red and the color of $zv_{n/2}$ from red to blue.
    Although $d_{G_{\blue,\phi''}}(y) = d_{G_{\blue,\phi''}}(v_n) = (n/2) - 1$, they are not neighbors in $G_{\blue,\phi''}$.
    Also, any vertex in $X_2 \setminus \{v_n\}$ has degree at least $(n/2)+1$ in $G_{\blue,\phi''}$, so there are no conflicts in $G_{\blue,\phi''}$ involving $y$ or $v_n$.
    Note that we have $d_{G_{\red,\phi''}}(v_n) = (n/2) + 1$, but since $d_{G_{\red,\phi''}}(v_{n/2}) = n/2$ and every vertex in $X_1 \setminus \{v_{n/2}\}$ has red degree at least $(n/2) + 2$ in $G_{\red,\phi''}$, there are no conflicts in $G_{\red,\phi''}$ involving $v_n$. 
    This also implies that, since $d_{G_{\red,\phi''}}(v_{n/2}) = n/2$, there are no conflicts involving $v_{n/2}$ in $G_{\red,\phi''}$.
    Furthermore, since $d_{G_{\blue,\phi''}}(v_{n/2}) = n/2$ and any vertex in $X_2\setminus\{v_n\}$ has degree at least $(n/2)+1$ in $G_{\blue,\phi''}$, we conclude that there are no conflicts involving $v_{n/2}$ in $G_{\blue,\phi''}$.
    Therefore, $G_{\red,\phi''}$ and $G_{\blue,\phi''}$ are locally irregular, and the result follows.
\end{proof}%

\subsection{Strange colorings of complete graphs}%
\label{sub:sec:strange}

As in Section~\ref{sec:large-split}, we start by defining the colorings of complete graphs that are the starting point for proving the results in this section.
The following definition is technical, so we refer the reader to Figure~\ref{fig:strange} for a better understanding of it.

\begin{definition}[Strange coloring]%
\label{def:strange}
    Given a complete graph $G$ with $n$ vertices and a sequence $\vec V = (v_1, \ldots, v_n)$ of~$V(G)$, first consider a coloring $\phi' \colon E(G) \to \{\red, \blue\}$ defined as follows, where $X_1 = \{v_1, \ldots, v_{\ceil{n/2}}\}$ and $X_2 = V(G) \setminus X_1$:
    \begin{enumerate}[label=\rmlabel]
        \item $G_\red[X_1]$ is a complete graph;
        \item $G_\red[X_2]$ contains no edges;
        \item $N_{G_{\red,\phi'}}(v_i) = \{v_1, \ldots, v_{n - i}\}$ for $\ceil{n/2}+1 \leq i \leq n-1$;
        \item $\phi'(v_1v_n) = \red$;
        \item $\phi'(v_{\ceil{n/2}+1} v_{\floor{n/2}}) = \red$;
        \item All other edges are blue.
    \end{enumerate}
    The \emph{strange coloring} of $G$ for $\vec V$ is the coloring $\phi$ obtained from $\phi'$ by changing the color of the following edges, which we call \emph{strange edges}:
    \begin{itemize}
        \item $v_{\floor{n/2}}v_{\floor{n/2} - 1}$ becomes blue;
        \item $v_{\floor{n/2} - 1}v_{n-1}$ becomes red;
        \item $v_1v_{\ceil{n/2}+1}$ for $\ceil{n/2}$ even becomes blue;
        \item $v_1v_{\floor{n/2}+1}$ for $\ceil{n/2}$ odd becomes blue;
        \item $v_{n-2}v_{n-3}$, $v_{n-4}v_{n-5},$ $\ldots,$ $v_{n/2+4}v_{n/2+3}$ for $n = 0\text{ (mod 4)}$ become red;
        \item $v_{n-1}v_{\floor{n/2}-1}$, $v_{n-2}v_{\floor{n/2}}$, $v_{n-3}v_{\floor{n/2}+1}$, $v_{n-4}v_{n-5}$, $v_{n-6}v_{n-7},$ $\ldots,$ $v_{\floor{n/2}+3}v_{\floor{n/2}+2}$ for $n = 1\text{ (mod 4)}$ become red;
        \item $v_{n-2}v_{n-3}$, $v_{n-4}v_{n-5},$ $\ldots,$ $v_{n/2+3}v_{n/2+2}$ for $n = 2\text{ (mod 4)}$ become red;
        \item $v_{n-2}v_{n-3}$, $v_{n-4}v_{n-5},$ $\ldots,$ $v_{\floor{n/2}+4}v_{\floor{n/2}+3}$ for $n = 3\text{ (mod 4)}$ become red.
   \end{itemize}
\end{definition}

Note that in a strange coloring of a complete graph $G$ for a sequence $\vec V = (v_1, \ldots, v_n)$, we have
\begin{equation*}
    d_{G_{\red, \phi}}(v_i) = n - i + 1 \text{, for } 3 \leq i \leq n \enspace ,
\end{equation*}
\begin{equation*}
    d_{G_{\red, \phi}}(v_1) = n - \floor{n/2} - 1 \enspace ,
\end{equation*}
and
\begin{equation*}
    d_{G_{\red, \phi}}(v_2) = 
    \begin{cases}
        n - \floor{n/2} - 2 & \text{ if } \ceil{n/2} \text{ is even } \\
        n - \floor{n/2} - 1 & \text{ otherwise } \enspace . \\
    \end{cases}
\end{equation*}

Therefore, we know that, for a strange coloring $\phi$ of $G$,
\begin{equation}
\label{eq:strange:samereddegree}
    \text{the only vertices with same degree in } G_{\red,\phi} \text{ are } v_1 \tand v_{\floor{n/2} + 2} \enspace ,
\end{equation}
and
\begin{equation}
\label{eq:strange:samebluedegree}
    \text{the only vertices with same degree in } G_{\blue,\phi} \text{ are }
    \begin{cases}
        v_2 \tand v_{\floor{n/2} + 3} & \text{ if } \ceil{n/2} \text{ is even } \\
        v_2, v_1, \tand v_{\floor{n/2} + 1} & \text{ otherwise } \enspace .
    \end{cases}
\end{equation}

From the definition of strange coloring and by~\eqref{eq:strange:samereddegree} and~\eqref{eq:strange:samebluedegree}, we conclude that $G_\red$ has exactly one conflicting edge $v_1 v_{\floor{n/2}+2}$ while  
\begin{equation*}
    \text{ $G_\blue$ has exactly }
    \begin{cases}
        \text{one conflicting edge } v_2 v_{\floor{n/2}+3} & \text{ if } \ceil{n/2} \text{ is even }\\
        \text{two conflicting edges } v_2 v_{\floor{n/2}+2} \tand v_2 v_1 & \text{ otherwise } \enspace .
    \end{cases}
\end{equation*}

\begin{figure}[ht]
\begin{tikzpicture}[scale=0.8]

    \pgfmathsetmacro{\qtd}{int(10)}
    \pgfmathsetmacro{\raio}{2.7}

    \tikzset{black vertex/.style={circle,draw,minimum size=2mm,inner sep=0pt,outer sep=1pt,fill=black, color=black}}
    \tikzset{redconf vertex/.style={circle,minimum size=3mm,inner sep=0pt,outer sep=1pt,fill=red,draw=white}}
    \tikzset{blueconf vertex/.style={circle,minimum size=3mm,inner sep=0pt,outer sep=1pt,fill=blue,draw=white}}
    \tikzset{redblueconf vertex/.style={circle,minimum size=1mm,inner sep=0pt,outer sep=1pt,fill=blue,draw=blue}}

    \tikzstyle{blue edge}=[line width=4, blue]
    \tikzstyle{red edge}=[line width=1, red]
    \tikzstyle{strong red edge}=[line width=4, red]

    \pgfmathsetmacro{\espaco}{360/\qtd}
    \pgfmathsetmacro{\espacoMetade}{360/(\qtd*2)}    
    \pgfmathsetmacro{\rot}{-\espaco - 90 + \espacoMetade}
        
    \pgfmathsetmacro{\qtdMetade}{int(\qtd/2)}
    \pgfmathsetmacro{\qtdMetadeMenosDois}{int(\qtd/2 - 2)}
    \pgfmathsetmacro{\qtdMetadeMenosUm}{int(\qtd/2 -1)}
    \pgfmathsetmacro{\qtdMetadeMaisDois}{int(\qtd/2 + 2)}
    \pgfmathsetmacro{\qtdMetadeMaisUm}{int(\qtd/2 + 1)}
    \pgfmathsetmacro{\qtdMetadeMaisTres}{int(\qtd/2 + 3)}

    \pgfmathsetmacro{\qtdMenosTres}{int(\qtd - 3)}
    \pgfmathsetmacro{\qtdMenosDois}{int(\qtd - 2)}
    \pgfmathsetmacro{\qtdMenosUm}{int(\qtd-1)}

    \draw[very thick,dashed,black] (90:\raio + 1) -- (-90:\raio + 1);

    \foreach \x in {1,...,\qtd} {
        \pgfmathsetmacro{\ang}{\x*\espaco+\rot}
        \node (\x) [black vertex] at (\ang:\raio) {};
    }

    \foreach \x in {2,...,\qtd} {
        \pgfmathsetmacro {\y}{int(\qtd/2 + 1)}
        \ifthenelse{ \equal{\x}{\y} } {
            \draw[blue edge] (1) -- (\x);
        }{
            \draw[red edge] (1) -- (\x);
        }
    }

    \foreach \x in {2,...,\qtdMetadeMenosDois} {
        \pgfmathsetmacro{\xMaisUm}{int(\x + 1)}
        \pgfmathsetmacro{\fim}{int(\qtd - \x)}
        \foreach \y in {\xMaisUm,...,\fim} {
            \draw[red edge] (\x) -- (\y);
        }
    }

    \foreach \x in {2,...,\qtdMetadeMenosDois} {
        \pgfmathsetmacro{\xMaisUm}{int(\x + 1)}
        \pgfmathsetmacro{\fim}{int(\qtd - \x)}
        \foreach \y in {\xMaisUm,...,\fim} {
            \draw[red edge] (\x) -- (\y);
        }
    }

    \draw[blue edge] (\qtdMetadeMenosUm) -- (\qtdMetade);
    \draw[red edge] (\qtdMetadeMenosUm) -- (\qtdMetadeMaisUm);
    \draw[strong red edge] (\qtdMetadeMenosUm) -- (\qtdMenosUm);

    \draw[red edge] (\qtdMetade) -- (\qtdMetadeMaisUm);

    \pgfmathsetmacro{\qtdMetadeMaisDoisMaisStep}{int(\qtdMetadeMaisDois + 2)}
    \ifthenelse{\qtdMetadeMaisDoisMaisStep < \qtdMenosTres \OR \qtdMetadeMaisDoisMaisStep = \qtdMenosTres}{
        \foreach \x in {\qtdMetadeMaisDois,\qtdMetadeMaisDoisMaisStep,...,\qtdMenosTres} {
            \pgfmathsetmacro{\xMaisUm}{int(\x + 1)}
            \draw[strong red edge] (\x) -- (\xMaisUm);
        }
    }{
        \pgfmathsetmacro{\x}{int(\qtdMetadeMaisDois)}
        \pgfmathsetmacro{\xMaisUm}{int(\x + 1)}
        \draw[strong red edge] (\x) -- (\xMaisUm);
    }

    \pgfmathsetmacro{\blueDegree}{int(\qtdMetade)}
    \pgfmathsetmacro{\redDegree}{int(\qtd - 1 - \blueDegree)}
  
    \node (1_label)              [ ] at (1 * \espaco+\rot:\raio + 1)                  {$v_{3}$ ({\color{red} $\qtdMenosDois$}, {\color{blue} $1$})};
    \node (metade_label)         [ ] at (\qtdMetade * \espaco+\rot:\raio + 1)         {$v_{1}$ ({\color{red} $\redDegree$}, {\color{blue} $\blueDegree$})};
    \node (metadeMaisUm_label)   [ ] at (\qtdMetadeMaisUm * \espaco+\rot:\raio + 1)   {$v_{\qtdMetadeMaisDois}$ ({\color{red} $\redDegree$}, {\color{blue} $\blueDegree$})};
    \node (metadeMaisDois_label) [ ] at (\qtdMetadeMaisDois * \espaco+\rot:\raio + 1) {$v_{2}$ ({\color{red} $\redDegree$}, {\color{blue} $\blueDegree$})};

    \foreach \x in {2,...,\qtdMetadeMenosUm} {
        \pgfmathsetmacro{\xMaisDois}{int(\x + 2)}
        \pgfmathsetmacro{\redDegree}{int(\qtd - \x - 1)}
        \pgfmathsetmacro{\blueDegree}{int(\x)}
        \node (\x_label) [ ] at (\x * \espaco+\rot:\raio + 1) {$v_{\xMaisDois}$ ({\color{red} $\redDegree$}, {\color{blue} $\blueDegree$})};
    }

    \foreach \x in {\qtdMetadeMaisTres,...,\qtd} {
        \pgfmathsetmacro{\redDegree}{int(\qtd - \x + 1)}
        \pgfmathsetmacro{\blueDegree}{int(\qtd - 1 - \redDegree)}
        \node (\x_label) [ ] at (\x * \espaco+\rot:\raio + 1) {$v_{\x}$ ({\color{red} $\redDegree$}, {\color{blue} $\blueDegree$})};
    }

    \node () [redconf vertex] at (\qtdMetade * \espaco+\rot:\raio) {};
    \node () [redblueconf vertex] at (\qtdMetade * \espaco+\rot:\raio) {};

    \node () [redconf vertex] at (\qtdMetadeMaisUm * \espaco+\rot:\raio) {};

    \node () [blueconf vertex] at (\qtdMetadeMaisDois * \espaco+\rot:\raio) {};
  
\end{tikzpicture}
     \hfill
\begin{tikzpicture}[scale=0.8]

    \pgfmathsetmacro{\qtd}{int(11)}
    \pgfmathsetmacro{\raio}{2.7}

    \tikzset{black vertex/.style={circle,draw,minimum size=2mm,inner sep=0pt,outer sep=1pt,fill=black, color=black}}
    \tikzset{redconf vertex/.style={circle,minimum size=3mm,inner sep=0pt,outer sep=1pt,fill=red,draw=white}}
    \tikzset{blueconf vertex/.style={circle,minimum size=3mm,inner sep=0pt,outer sep=1pt,fill=blue,draw=white}}
    \tikzset{redblueconf vertex/.style={circle,minimum size=1mm,inner sep=0pt,outer sep=1pt,fill=blue,draw=blue}}

    \tikzstyle{blue edge}=[line width=4, blue]
    \tikzstyle{red edge}=[line width=1, red]
    \tikzstyle{strong red edge}=[line width=4, red]

    \pgfmathsetmacro{\espaco}{360/\qtd}
    \pgfmathsetmacro{\espacoMetade}{360/(\qtd*2)}    
    \pgfmathsetmacro{\rot}{-\espaco - 90 + \espacoMetade}
        
    \pgfmathsetmacro{\qtdMetade}{int(\qtd/2)}
    \pgfmathsetmacro{\qtdMetadeMenosDois}{int(\qtd/2 - 2)}
    \pgfmathsetmacro{\qtdMetadeMenosUm}{int(\qtd/2 -1)}
    \pgfmathsetmacro{\qtdMetadeMaisDois}{int(\qtd/2 + 2)}
    \pgfmathsetmacro{\qtdMetadeMaisUm}{int(\qtd/2 + 1)}
    \pgfmathsetmacro{\qtdMetadeMaisTres}{int(\qtd/2 + 3)}
    \pgfmathsetmacro{\qtdMetadeMaisQuatro}{int(\qtd/2 + 4)}

    \pgfmathsetmacro{\qtdMenosTres}{int(\qtd - 3)}
    \pgfmathsetmacro{\qtdMenosDois}{int(\qtd - 2)}
    \pgfmathsetmacro{\qtdMenosUm}{int(\qtd-1)}

    \pgfmathsetmacro{\ang}{\qtdMetadeMaisUm*\espaco+\espacoMetade+\rot}
    \draw[very thick,dashed,black] (\ang:\raio + 1) -- (-90:\raio + 1);

    \foreach \x in {1,...,\qtd} {
        \pgfmathsetmacro{\ang}{\x*\espaco+\rot}
        \node (\x) [black vertex] at (\ang:\raio) {};
    }

    \foreach \x in {2,...,\qtd} {
        \pgfmathsetmacro {\y}{int(\qtd/2 + 2)}
        \ifthenelse{ \equal{\x}{\y} } {
            \draw[blue edge] (1) -- (\x);
        }{
            \draw[red edge] (1) -- (\x);
        }
    }

    \foreach \x in {2,...,\qtdMetadeMenosDois} {
        \pgfmathsetmacro{\xMaisUm}{int(\x + 1)}
        \pgfmathsetmacro{\fim}{int(\qtd - \x)}
        \foreach \y in {\xMaisUm,...,\fim} {
            \draw[red edge] (\x) -- (\y);
        }
    }

    \draw[blue edge] (\qtdMetadeMenosUm) -- (\qtdMetade);
    \draw[red edge] (\qtdMetadeMenosUm) -- (\qtdMetadeMaisUm);
    \draw[red edge] (\qtdMetadeMenosUm) -- (\qtdMetadeMaisDois);
    \draw[red edge] (\qtdMetade) -- (\qtdMetadeMaisDois);
    \draw[strong red edge] (\qtdMetadeMenosUm) -- (\qtdMenosUm);

    \draw[red edge] (\qtdMetade) -- (\qtdMetadeMaisUm);

    \pgfmathsetmacro{\qtdMetadeMaisTresMaisStep}{int(\qtdMetadeMaisTres + 2)}
    \ifthenelse{\qtdMetadeMaisTresMaisStep < \qtdMenosTres \OR \qtdMetadeMaisTresMaisStep = \qtdMenosTres}{
        \foreach \x in {\qtdMetadeMaisTres,\qtdMetadeMaisTresMaisStep,...,\qtdMenosTres} {
            \pgfmathsetmacro{\xMaisUm}{int(\x + 1)}
            \draw[strong red edge] (\x) -- (\xMaisUm);
        }
    }{
        \pgfmathsetmacro{\x}{int(\qtdMetadeMaisTres)}
        \pgfmathsetmacro{\xMaisUm}{int(\x + 1)}
        \draw[strong red edge] (\x) -- (\xMaisUm);
    }

    \pgfmathsetmacro{\blueDegree}{int(\qtdMetade)}
    \pgfmathsetmacro{\redDegree}{int(\qtd - 1 - \blueDegree)}
    \pgfmathsetmacro{\blueDegreeTwo}{int(\qtdMetade + 1)}
    \pgfmathsetmacro{\redDegreeTwo}{int(\qtd - 1 - \blueDegree - 1)}
  
    \node (1_label)              [ ] at (1 * \espaco+\rot:\raio + 1)                  {$v_{3}$ ({\color{red} $\qtdMenosDois$}, {\color{blue} $1$})};
    \node (metade_label)         [ ] at (\qtdMetade * \espaco+\rot:\raio + 1)         {$v_{1}$ ({\color{red} $\redDegree$}, {\color{blue} $\blueDegree$})};
    \node (metadeMaisUm_label)   [ ] at (\qtdMetadeMaisUm * \espaco+\rot:\raio + 1)   {$v_{\qtdMetadeMaisDois}$ ({\color{red} $\redDegree$}, {\color{blue} $\blueDegree$})};
    \node (metadeMaisDois_label)   [ ] at (\qtdMetadeMaisDois * \espaco+\rot:\raio + 1)   {$v_{\qtdMetadeMaisTres}$ ({\color{red} $\redDegreeTwo$}, {\color{blue} $\blueDegreeTwo$})};
    \node (metadeMaisTres_label) [ ] at (\qtdMetadeMaisTres * \espaco+\rot:\raio + 1) {$v_{2}$ ({\color{red} $\redDegreeTwo$}, {\color{blue} $\blueDegreeTwo$})};

    \foreach \x in {2,...,\qtdMetadeMenosUm} {
        \pgfmathsetmacro{\xMaisDois}{int(\x + 2)}
        \pgfmathsetmacro{\redDegree}{int(\qtd - \x - 1)}
        \pgfmathsetmacro{\blueDegree}{int(\x)}
        \node (\x_label) [ ] at (\x * \espaco+\rot:\raio + 1) {$v_{\xMaisDois}$ ({\color{red} $\redDegree$}, {\color{blue} $\blueDegree$})};
    }

    \foreach \x in {\qtdMetadeMaisQuatro,...,\qtd} {
        \pgfmathsetmacro{\redDegree}{int(\qtd - \x + 1)}
        \pgfmathsetmacro{\blueDegree}{int(\qtd - 1 - \redDegree)}
        \node (\x_label) [ ] at (\x * \espaco+\rot:\raio + 1) {$v_{\x}$ ({\color{red} $\redDegree$}, {\color{blue} $\blueDegree$})};
    }

    \node () [redconf vertex] at (\qtdMetade * \espaco+\rot:\raio) {};

    \node () [redconf vertex] at (\qtdMetadeMaisUm * \espaco+\rot:\raio) {};

    \node () [blueconf vertex] at (\qtdMetadeMaisDois * \espaco+\rot:\raio) {};

    \node () [blueconf vertex] at (\qtdMetadeMaisTres * \espaco+\rot:\raio) {};
  
\end{tikzpicture}
 \\
\begin{tikzpicture}[scale=0.8]

    \pgfmathsetmacro{\qtd}{int(12)}
    \pgfmathsetmacro{\raio}{2.7}

    \tikzset{black vertex/.style={circle,draw,minimum size=2mm,inner sep=0pt,outer sep=1pt,fill=black, color=black}}
    \tikzset{redconf vertex/.style={circle,minimum size=3mm,inner sep=0pt,outer sep=1pt,fill=red,draw=white}}
    \tikzset{blueconf vertex/.style={circle,minimum size=3mm,inner sep=0pt,outer sep=1pt,fill=blue,draw=white}}
    \tikzset{redblueconf vertex/.style={circle,minimum size=1mm,inner sep=0pt,outer sep=1pt,fill=blue,draw=blue}}

    \tikzstyle{blue edge}=[line width=4, blue]
    \tikzstyle{red edge}=[line width=1, red]
    \tikzstyle{strong red edge}=[line width=4, red]

    \pgfmathsetmacro{\espaco}{360/\qtd}
    \pgfmathsetmacro{\espacoMetade}{360/(\qtd*2)}    
    \pgfmathsetmacro{\rot}{-\espaco - 90 + \espacoMetade}
        
    \pgfmathsetmacro{\qtdMetade}{int(\qtd/2)}
    \pgfmathsetmacro{\qtdMetadeMenosDois}{int(\qtd/2 - 2)}
    \pgfmathsetmacro{\qtdMetadeMenosUm}{int(\qtd/2 -1)}
    \pgfmathsetmacro{\qtdMetadeMaisDois}{int(\qtd/2 + 2)}
    \pgfmathsetmacro{\qtdMetadeMaisUm}{int(\qtd/2 + 1)}
    \pgfmathsetmacro{\qtdMetadeMaisTres}{int(\qtd/2 + 3)}
    \pgfmathsetmacro{\qtdMetadeMaisQuatro}{int(\qtd/2 + 4)}

    \pgfmathsetmacro{\qtdMenosTres}{int(\qtd - 3)}
    \pgfmathsetmacro{\qtdMenosDois}{int(\qtd - 2)}
    \pgfmathsetmacro{\qtdMenosUm}{int(\qtd-1)}

    \pgfmathsetmacro{\ang}{\qtdMetadeMaisUm*\espaco+\espacoMetade+\rot}
    \draw[very thick,dashed,black] (\ang:\raio + 1) -- (-90:\raio + 1);

    \foreach \x in {1,...,\qtd} {
        \pgfmathsetmacro{\ang}{\x*\espaco+\rot}
        \node (\x) [black vertex] at (\ang:\raio) {};
    }

    \foreach \x in {2,...,\qtd} {
        \pgfmathsetmacro {\y}{int(\qtd/2 + 1)}
        \ifthenelse{ \equal{\x}{\y} } {
            \draw[blue edge] (1) -- (\x);
        }{
            \draw[red edge] (1) -- (\x);
        }
    }

    \foreach \x in {2,...,\qtdMetadeMenosDois} {
        \pgfmathsetmacro{\xMaisUm}{int(\x + 1)}
        \pgfmathsetmacro{\fim}{int(\qtd - \x)}
        \foreach \y in {\xMaisUm,...,\fim} {
            \draw[red edge] (\x) -- (\y);
        }
    }

    \draw[blue edge] (\qtdMetadeMenosUm) -- (\qtdMetade);
    \draw[red edge] (\qtdMetadeMenosUm) -- (\qtdMetadeMaisUm);
    \draw[strong red edge] (\qtdMetadeMenosUm) -- (\qtdMenosUm);

    \draw[red edge] (\qtdMetade) -- (\qtdMetadeMaisUm);

    \pgfmathsetmacro{\qtdMetadeMaisTresMaisStep}{int(\qtdMetadeMaisTres + 2)}
    \ifthenelse{\qtdMetadeMaisTresMaisStep < \qtdMenosTres \OR \qtdMetadeMaisTresMaisStep = \qtdMenosTres}{
        \foreach \x in {\qtdMetadeMaisTres,\qtdMetadeMaisTresMaisStep,...,\qtdMenosTres} {
            \pgfmathsetmacro{\xMaisUm}{int(\x + 1)}
            \draw[strong red edge] (\x) -- (\xMaisUm);
        }
    }{
        \pgfmathsetmacro{\x}{int(\qtdMetadeMaisTres)}
        \pgfmathsetmacro{\xMaisUm}{int(\x + 1)}
        \draw[strong red edge] (\x) -- (\xMaisUm);
    }

    \pgfmathsetmacro{\blueDegree}{int(\qtdMetade)}
    \pgfmathsetmacro{\redDegree}{int(\qtd - 1 - \blueDegree)}
    \pgfmathsetmacro{\blueDegreeTwo}{int(\qtdMetade + 1)}
    \pgfmathsetmacro{\redDegreeTwo}{int(\qtd - 1 - \blueDegree - 1)}
  
    \node (1_label)              [ ] at (1 * \espaco+\rot:\raio + 1)                  {$v_{3}$ ({\color{red} $\qtdMenosDois$}, {\color{blue} $1$})};
    \node (metade_label)         [ ] at (\qtdMetade * \espaco+\rot:\raio + 1)         {$v_{1}$ ({\color{red} $\redDegree$}, {\color{blue} $\blueDegree$})};
    \node (metadeMaisUm_label)   [ ] at (\qtdMetadeMaisUm * \espaco+\rot:\raio + 1)   {$v_{\qtdMetadeMaisDois}$ ({\color{red} $\redDegree$}, {\color{blue} $\blueDegree$})};
    \node (metadeMaisDois_label) [ ] at (\qtdMetadeMaisDois * \espaco+\rot:\raio + 1) {$v_{\qtdMetadeMaisTres}$ ({\color{red} $\redDegreeTwo$}, {\color{blue} $\blueDegreeTwo$})};
    \node (metadeMaisTres_label) [ ] at (\qtdMetadeMaisTres * \espaco+\rot:\raio + 1) {$v_{2}$ ({\color{red} $\redDegreeTwo$}, {\color{blue} $\blueDegreeTwo$})};

    \foreach \x in {2,...,\qtdMetadeMenosUm} {
        \pgfmathsetmacro{\xMaisDois}{int(\x + 2)}
        \pgfmathsetmacro{\redDegree}{int(\qtd - \x - 1)}
        \pgfmathsetmacro{\blueDegree}{int(\x)}
        \node (\x_label) [ ] at (\x * \espaco+\rot:\raio + 1) {$v_{\xMaisDois}$ ({\color{red} $\redDegree$}, {\color{blue} $\blueDegree$})};
    }

    \foreach \x in {\qtdMetadeMaisQuatro,...,\qtd} {
        \pgfmathsetmacro{\redDegree}{int(\qtd - \x + 1)}
        \pgfmathsetmacro{\blueDegree}{int(\qtd - 1 - \redDegree)}
        \node (\x_label) [ ] at (\x * \espaco+\rot:\raio + 1) {$v_{\x}$ ({\color{red} $\redDegree$}, {\color{blue} $\blueDegree$})};
    }

    \node () [redconf vertex] at (\qtdMetade * \espaco+\rot:\raio) {};

    \node () [redconf vertex] at (\qtdMetadeMaisUm * \espaco+\rot:\raio) {};

    \node () [blueconf vertex] at (\qtdMetadeMaisDois * \espaco+\rot:\raio) {};

    \node () [blueconf vertex] at (\qtdMetadeMaisTres * \espaco+\rot:\raio) {};
  
\end{tikzpicture}
     \hfill
\begin{tikzpicture}[scale=0.8]

    \pgfmathsetmacro{\qtd}{int(13)}
    \pgfmathsetmacro{\raio}{2.7}

    \tikzset{black vertex/.style={circle,draw,minimum size=2mm,inner sep=0pt,outer sep=1pt,fill=black, color=black}}
    \tikzset{redconf vertex/.style={circle,minimum size=3mm,inner sep=0pt,outer sep=1pt,fill=red,draw=white}}
    \tikzset{blueconf vertex/.style={circle,minimum size=3mm,inner sep=0pt,outer sep=1pt,fill=blue,draw=white}}
    \tikzset{redblueconf vertex/.style={circle,minimum size=1mm,inner sep=0pt,outer sep=1pt,fill=blue,draw=blue}}

    \tikzstyle{blue edge}=[line width=4, blue]
    \tikzstyle{red edge}=[line width=1, red]
    \tikzstyle{strong red edge}=[line width=4, red]

    \pgfmathsetmacro{\espaco}{360/\qtd}
    \pgfmathsetmacro{\espacoMetade}{360/(\qtd*2)}    
    \pgfmathsetmacro{\rot}{-\espaco - 90 + \espacoMetade}
        
    \pgfmathsetmacro{\qtdMetade}{int(\qtd/2)}
    \pgfmathsetmacro{\qtdMetadeMenosDois}{int(\qtd/2 - 2)}
    \pgfmathsetmacro{\qtdMetadeMenosUm}{int(\qtd/2 -1)}
    \pgfmathsetmacro{\qtdMetadeMaisDois}{int(\qtd/2 + 2)}
    \pgfmathsetmacro{\qtdMetadeMaisUm}{int(\qtd/2 + 1)}
    \pgfmathsetmacro{\qtdMetadeMaisTres}{int(\qtd/2 + 3)}
    \pgfmathsetmacro{\qtdMetadeMaisQuatro}{int(\qtd/2 + 4)}
    \pgfmathsetmacro{\qtdMetadeMaisCinco}{int(\qtd/2 + 5)}

    \pgfmathsetmacro{\qtdMenosCinco}{int(\qtd - 5)}
    \pgfmathsetmacro{\qtdMenosQuatro}{int(\qtd - 4)}
    \pgfmathsetmacro{\qtdMenosTres}{int(\qtd - 3)}
    \pgfmathsetmacro{\qtdMenosDois}{int(\qtd - 2)}
    \pgfmathsetmacro{\qtdMenosUm}{int(\qtd-1)}

    \pgfmathsetmacro{\ang}{\qtdMetade*\espaco+\espacoMetade+\rot}
    \draw[very thick,dashed,black] (\ang:\raio + 1) -- (-90:\raio + 1);

    \foreach \x in {1,...,\qtd} {
        \pgfmathsetmacro{\ang}{\x*\espaco+\rot}
        \node (\x) [black vertex] at (\ang:\raio) {};
    }

    \foreach \x in {2,...,\qtd} {
        \pgfmathsetmacro {\y}{int(\qtd/2 + 1)}
        \ifthenelse{ \equal{\x}{\y} } {
            \draw[blue edge] (1) -- (\x);
        }{
            \draw[red edge] (1) -- (\x);
        }
    }

    \foreach \x in {2,...,\qtdMetadeMenosDois} {
        \pgfmathsetmacro{\xMaisUm}{int(\x + 1)}
        \pgfmathsetmacro{\fim}{int(\qtd - \x)}
        \foreach \y in {\xMaisUm,...,\fim} {
            \draw[red edge] (\x) -- (\y);
        }
    }

    \draw[blue edge] (\qtdMetadeMenosUm) -- (\qtdMetade);
    \draw[red edge] (\qtdMetadeMenosUm) -- (\qtdMetadeMaisUm);
    \draw[red edge] (\qtdMetadeMenosUm) -- (\qtdMetadeMaisDois);
    \draw[strong red edge] (\qtdMetadeMenosUm) -- (\qtdMenosUm);

    \draw[red edge] (\qtdMetade) -- (\qtdMetadeMaisUm);

    \pgfmathsetmacro{\qtdMetadeMaisDoisMaisStep}{int(\qtdMetadeMaisDois + 2)}
    \ifthenelse{\qtdMetadeMaisDoisMaisStep < \qtdMenosCinco \OR \qtdMetadeMaisDoisMaisStep = \qtdMenosCinco}{
        \foreach \x in {\qtdMetadeMaisDois,\qtdMetadeMaisDoisMaisStep,...,\qtdMenosCinco} {
            \pgfmathsetmacro{\xMaisUm}{int(\x + 1)}
            \draw[strong red edge] (\x) -- (\xMaisUm);
        }
    }{
        \pgfmathsetmacro{\x}{int(\qtdMetadeMaisDois)}
        \pgfmathsetmacro{\xMaisUm}{int(\x + 1)}
        \draw[strong red edge] (\x) -- (\xMaisUm);
    }

    \draw[strong red edge] (\qtdMenosUm) -- (\qtdMetadeMenosUm);
    \draw[strong red edge] (\qtdMenosDois) -- (\qtdMetade);
    \draw[strong red edge] (\qtdMenosTres) -- (\qtdMetadeMaisUm);

    \pgfmathsetmacro{\blueDegree}{int(\qtdMetade)}
    \pgfmathsetmacro{\redDegree}{int(\qtd - 1 - \blueDegree)}
  
    \node (1_label)              [ ] at (1 * \espaco+\rot:\raio + 1)                  {$v_{3}$ ({\color{red} $\qtdMenosDois$}, {\color{blue} $1$})};
    \node (metade_label)         [ ] at (\qtdMetade * \espaco+\rot:\raio + 1)         {$v_{1}$ ({\color{red} $\redDegree$}, {\color{blue} $\blueDegree$})};
    \node (metadeMaisUm_label)   [ ] at (\qtdMetadeMaisUm * \espaco+\rot:\raio + 1)   {$v_{\qtdMetadeMaisDois}$ ({\color{red} $\redDegree$}, {\color{blue} $\blueDegree$})};
    \node (metadeMaisDois_label) [ ] at (\qtdMetadeMaisDois * \espaco+\rot:\raio + 1) {$v_{2}$ ({\color{red} $\redDegree$}, {\color{blue} $\blueDegree$})};

    \foreach \x in {2,...,\qtdMetadeMenosUm} {
        \pgfmathsetmacro{\xMaisDois}{int(\x + 2)}
        \pgfmathsetmacro{\redDegree}{int(\qtd - \x - 1)}
        \pgfmathsetmacro{\blueDegree}{int(\x)}
        \node (\x_label) [ ] at (\x * \espaco+\rot:\raio + 1) {$v_{\xMaisDois}$ ({\color{red} $\redDegree$}, {\color{blue} $\blueDegree$})};
    }

    \foreach \x in {\qtdMetadeMaisTres,...,\qtd} {
        \pgfmathsetmacro{\redDegree}{int(\qtd - \x + 1)}
        \pgfmathsetmacro{\blueDegree}{int(\qtd - 1 - \redDegree)}
        \node (\x_label) [ ] at (\x * \espaco+\rot:\raio + 1) {$v_{\x}$ ({\color{red} $\redDegree$}, {\color{blue} $\blueDegree$})};
    }

    \node () [redconf vertex] at (\qtdMetade * \espaco+\rot:\raio) {};
    \node () [redblueconf vertex] at (\qtdMetade * \espaco+\rot:\raio) {};

    \node () [redconf vertex] at (\qtdMetadeMaisUm * \espaco+\rot:\raio) {};

    \node () [blueconf vertex] at (\qtdMetadeMaisDois * \espaco+\rot:\raio) {};
  
\end{tikzpicture}
 
    \caption{Strange colorings of $E(K_n)$ with $n \in \{10, 11, 12, 13\}$, where $V(K_n) = \{v_1, \ldots, v_n\}$ and the sequence is $(v_3,$ $\ldots,$ $v_{\floor{n/2}+1},$ $v_1,$ $v_{\floor{n/2}+2},$ $v_{\floor{n/2}+3},$ $v_2,$ $v_{\floor{n/2}+4},$ $\ldots,$ $v_n)$ if $\ceil{n/2}$ is even, and it is $(v_3,$ $\ldots,$ $v_{\floor{n/2}+1},$ $v_1,$ $v_{\floor{n/2}+2},$ $v_2,$ $v_{\floor{n/2}+3},$ $\ldots,$ $v_n)$ otherwise.
    Strange edges are highlighted, as well as the vertices of the conflicting edges.
    For better visualization, we omit the other blue edges.}
    \label{fig:strange}
\end{figure}%

Given a graph $G$ and a $2$-edge coloring $\phi \colon E(G) \to \{\red, \blue\}$, we say an even cycle $C = (u_1, \ldots, u_k, u_{k+1} = u_1)$ is an \emph{alternating cycle in $\phi$} if $\phi(u_iu_{i+1}) \neq \phi(u_{i+1}u_{i+2})$, for any $1 \leq i < k$, and $d_{G_\gamma}(u_i) \neq d_{G_\gamma}(u_{i + 1})$, for any $1 \leq i \leq k$ and $\gamma \in \{\red, \blue\} \setminus \phi(u_iu_{i + 1})$.
In other words, an alternating cycle has incident edges with different colors and the endpoints of a red edge (resp.\ blue edge) have different blue degrees (resp.\ red degrees).
Let $\phi'$ be the $2$-edge coloring of $E(G)$ such that $\phi'(uv) \in \{\red, \blue\} \setminus \phi(uv)$ if $uv \in E(C)$ and $\phi'(uv) = \phi(uv)$ otherwise.
We say $\phi'$ is \emph{obtained from $\phi$ by inverting $C$}.

Lemma~\ref{lem:alternating-cycle} shows that an inversion on an alternating cycle does not create conflicting edges.
It will be used in the proof of Lemma~\ref{lemma:splitlarge-3}.

\begin{lemma}%
\label{lem:alternating-cycle}
    Let $G$ be a graph, $\phi \colon E(G) \to \{\red, \blue\}$ be a $2$-edge coloring of $G$.
    Let $C$ be an alternating cycle in $\phi$ and let $\phi'$ be obtained from $\phi$ by inverting $C$.
    The set of conflicting edges of $G_{\gamma,\phi'}$ is a subset of the conflicting edges of $G_{\gamma,\phi}$, for any $\gamma \in \{\red, \blue\}$.
\end{lemma}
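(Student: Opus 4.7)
The plan is to first establish that inverting an alternating cycle $C$ preserves the red-degree and blue-degree of every vertex, and then to argue case-by-case on whether a putative new conflicting edge lies on $C$ or not. With this, any conflicting edge of $G_{\gamma,\phi'}$ will either already be conflicting in $G_{\gamma,\phi}$ or be ruled out by the hypothesis on $C$.

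First I would prove the \emph{degree-preservation} claim: for every $v \in V(G)$ and every $\gamma \in \{\red,\blue\}$, we have $d_{G_{\gamma,\phi'}}(v) = d_{G_{\gamma,\phi}}(v)$. If $v \notin V(C)$, none of its incident edges changes color, so this is trivial. If $v \in V(C)$, then $v$ is incident to exactly two edges of $C$, and by the definition of alternating cycle these two edges have different colors in $\phi$. Inverting $C$ swaps the colors on these two edges, so after the inversion $v$ is still incident to one red and one blue cycle-edge; since all non-cycle edges keep their color, the $\gamma$-degree of $v$ is unchanged.

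Next I would take an arbitrary conflicting edge $xy$ of $G_{\gamma,\phi'}$ and split into two cases. If $xy \notin E(C)$, then $\phi'(xy) = \phi(xy) = \gamma$, so $xy \in E(G_{\gamma,\phi})$; combined with degree-preservation,
\begin{equation*}
d_{G_{\gamma,\phi}}(x) = d_{G_{\gamma,\phi'}}(x) = d_{G_{\gamma,\phi'}}(y) = d_{G_{\gamma,\phi}}(y),
\end{equation*}
so $xy$ is already conflicting in $G_{\gamma,\phi}$, as required. If instead $xy \in E(C)$, then $\phi(xy) \neq \gamma$, equivalently $\gamma \in \{\red,\blue\} \setminus \phi(xy)$; by the alternating-cycle hypothesis, $d_{G_{\gamma,\phi}}(x) \neq d_{G_{\gamma,\phi}}(y)$, and then degree-preservation gives $d_{G_{\gamma,\phi'}}(x) \neq d_{G_{\gamma,\phi'}}(y)$, contradicting the assumption that $xy$ is conflicting in $G_{\gamma,\phi'}$. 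Thus this case never occurs, which concludes the proof.

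I do not expect a real obstacle here: the two ingredients, degree-preservation along an alternating cycle and the case split on cycle versus non-cycle edges, are both straightforward consequences of the definitions. The only delicate point is bookkeeping the correspondence between $\gamma$ and $\{\red,\blue\} \setminus \phi(uv)$ in the definition of alternating cycle, which is exactly tailored so that edges whose color flips under the inversion are protected by the degree-inequality condition on the \emph{new} color class.
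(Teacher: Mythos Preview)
Your proposal is correct and follows essentially the same approach as the paper: first establish degree preservation (each vertex on $C$ is incident to one red and one blue cycle-edge, and the swap keeps this balance), then split according to whether the edge under consideration lies on $C$ or not, using the alternating-cycle condition to rule out the on-cycle case. If anything, your write-up of the second case is slightly cleaner than the paper's, which phrases the conclusion for cycle edges in terms of $G_{\bar\gamma,\phi}$ rather than $G_{\bar\gamma,\phi'}$ and leaves the final step (applying degree preservation) implicit.
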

\begin{proof}
    First note that, in $\phi$, every vertex of $V(C)$ has one incident blue edge and one incident red edge, which remains valid in $\phi'$.
    No other edge has its color changed.
    Thus, $d_{G_{\gamma,\phi}}(v) = d_{G_{\gamma,\phi'}}(v)$ for any $v \in V(G)$ and $\gamma \in \{\red,\blue\}$.

    Let $\gamma \in \{\red,\blue\}$, $\bar{\gamma} \in \{\red,\blue\} \setminus \{\gamma\}$, and consider any edge $uv \in E(G)$.
    If $uv \notin E(C)$, then $uv$ is conflicting in $G_{\gamma,\phi}$ (resp.\ $G_{\bar{\gamma},\phi}$) if and only if it is conflicting in $G_{\gamma,\phi'}$ (resp.\ $G_{\bar{\gamma},\phi'}$).
    So let $uv \in E(C)$ and let $\phi(uv) = \gamma$.
    From the definition of alternating cycle, $d_{G_{\bar{\gamma},\phi}}(u) \neq d_{G_{\bar{\gamma},\phi}}(v)$.
    This means that $uv$ is not conflicting in $G_{\bar{\gamma},\phi}$, no matter if it is conflicting in $G_{\gamma,\phi}$ or not.
\end{proof}%

\begin{lemma}%
\label{lemma:splitlarge-3}
    Let $n\geq 10$ and let $G(X,Y)$ be a split graph with $X = \{v_1, \ldots, v_n\}$ where $d_1 \geq \cdots \geq d_n$ and $d_{\floor{n/2}} = 0$.
    If $d_1 \geq \floor{n/2}$ or $d_2 \geq 1$, then $\chii(G) = 2$.
\end{lemma}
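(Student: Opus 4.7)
\textbf{Proof plan for Lemma~\ref{lemma:splitlarge-3}.}
First note that the hypothesis $d_{\floor{n/2}}=0$ forces $d_{\floor{n/2}}=d_{\floor{n/2}+1}=0$, so at least two vertices of $X$ have equal degrees in $G$ and Fact~\ref{fact:splitlarge-0} gives $\chii(G)\ge 2$. The whole task is to exhibit a locally irregular $2$-edge coloring $\phi$ of $G$.

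The plan is to start with a strange coloring of $E(G[X])$ for a cleverly chosen sequence $\vec V$ of $X$, and then decide the colors of the $X$--$Y$ edges so as to resolve the small number of conflicting edges produced by the strange coloring. Following the pattern of Figure~\ref{fig:strange}, I would place $v_1$, $v_2$, $v_{\floor{n/2}+2}$, $v_{\floor{n/2}+3}$ at the ``pivot'' positions $\floor{n/2},\floor{n/2}+1,\floor{n/2}+2,\floor{n/2}+3$ of $\vec V$ (with the order between $v_2$ and $v_{\floor{n/2}+3}$ depending on the parity of $\ceil{n/2}$, exactly as in the figure), and fill the remaining positions with $v_3,\dots,v_{\floor{n/2}+1}$ and $v_{\floor{n/2}+4},\dots,v_n$. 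By~\eqref{eq:strange:samereddegree} and~\eqref{eq:strange:samebluedegree} the only conflicting edges inside $G[X]$ then have both endpoints in $\{v_1,v_2,v_{\floor{n/2}+2},v_{\floor{n/2}+3}\}$, and in particular every vertex that is involved in a conflict either belongs to $\{v_1,v_2\}$ (where it can receive edges from $Y$) or is far from $Y$ in the sense that its degree can only be repaired indirectly.

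Next I would extend $\phi$ to the edges between $X$ and $Y$, noting that because $d_{\floor{n/2}}=0$ every such edge is incident with some vertex of $\{v_1,\dots,v_{\floor{n/2}-1}\}$. In the case $d_2\ge 1$, I would color all edges from $v_1$ to $Y$ red and all edges from $v_2$ to $Y$ blue: the first shifts $d_{G_{\red}}(v_1)$ up by $d_1$, breaking the red conflict $v_1v_{\floor{n/2}+2}$, and the second shifts $d_{G_{\blue}}(v_2)$ up by $d_2\ge 1$, breaking the blue conflict(s) involving $v_2$. Edges from $v_i$ with $3\le i\le\floor{n/2}-1$ to $Y$ would be distributed between red and blue so that each $v_i$ keeps red and blue degrees distinct from those of its neighbors, which is easy because the red/blue degrees of $v_3,\dots,v_{\floor{n/2}-1}$ under the strange coloring are already strictly decreasing and widely separated from those of vertices in $X_2$. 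In the complementary case $d_1\ge\floor{n/2}$ and $d_2=0$, only $v_1$ has $Y$-edges, so I would split its $d_1$ leaf-edges into $r$ red and $d_1-r$ blue and pick $r$ avoiding the (bounded) set of values of $r$ that would equate $d_{G_{\red}}(v_1)$ with the red degree of some red neighbor of $v_1$ or $d_{G_{\blue}}(v_1)$ with a blue degree of some blue neighbor; since $d_1\ge\floor{n/2}$ supplies $\ge\floor{n/2}+1$ candidates for $r$, such a choice exists.

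The main obstacle I expect is not the idea but the casework: the strange coloring behaves differently according to the parity of $\ceil{n/2}$ (one versus two blue conflicts), and the two hypotheses ``$d_1\ge\floor{n/2}$'' and ``$d_2\ge 1$'' overlap in complicated ways, so the bookkeeping of which $Y$-edges are colored red and which blue will be delicate. When the naive assignment above accidentally creates a new conflict — typically between some $y\in Y$ and one of $v_1,v_2,v_n$ whose monochromatic degrees happen to coincide — I would invoke Lemma~\ref{lem:alternating-cycle}: locate a short alternating cycle through the offending edge (the freedom coming from the many red and blue edges already present inside $X$) and invert it, which by the lemma cannot introduce any new conflicting edge, to finish the coloring.
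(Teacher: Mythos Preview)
Your strategy coincides with the paper's: start from a strange coloring of $G[X]$ with $v_1,v_2$ at the pivot positions of the sequence (exactly as in Figure~\ref{fig:strange}), extend to the $X$--$Y$ edges, and repair leftover conflicts with alternating cycles via Lemma~\ref{lem:alternating-cycle}. The paper, however, organizes the repair differently, and this exposes two gaps in your plan.

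First, you have the location of the residual conflicts wrong. After the extension, no $X$--$Y$ edge is ever conflicting: each $y\in Y$ has red degree at most $|X_{\red}|\le\floor{n/2}-2$ and blue degree at most $1$, both strictly below the corresponding degree of its clique neighbor. The conflicts that survive are \emph{inside $X$}. Adding $d_1$ red leaves to $v_1$ does not simply ``break'' the red conflict $v_1v_{\floor{n/2}+2}$; it pushes $d_{G_{\red}}(v_1)$ up by $d_1$ and, unless $d_1$ is large, makes it coincide with $d_{G_{\red}}(v_j)$ for some $v_j\in\{v_3,\dots,v_{\floor{n/2}}\}$, producing a \emph{new} red conflict $v_1v_j$. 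The analogous thing happens in blue for $v_2$. The paper's proof is built around exactly these two potential $X$--$X$ conflicts: it exhibits an explicit alternating $4$-cycle through $v_1v_j$ (to repair red) and then an explicit alternating $6$-cycle through $v_2v_k$ (to repair blue), checking every degree inequality needed for the cycle to be alternating. Your plan invokes Lemma~\ref{lem:alternating-cycle} only for hypothetical $X$--$Y$ conflicts, which never arise.

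Second, in the case $d_2=0$, $d_1\ge\floor{n/2}$, splitting $v_1$'s leaf-edges into $r$ red and $d_1-r$ blue does nothing for the blue conflict(s) incident with $v_2$: those involve $v_2$ and one or two of $v_1,v_{\floor{n/2}+2},v_{\floor{n/2}+3}$ depending on the parity of $\ceil{n/2}$, and adjusting only $v_1$'s degrees cannot eliminate a conflict of the form $v_2v_{\floor{n/2}+3}$. Your counting for $r$ is also not sufficient as stated: $v_1$ has $n-1$ clique neighbors, each potentially forbidding one value of $r$, against only $d_1+1\ge\floor{n/2}+1$ available choices. The paper sidesteps this case split entirely by always coloring $v_1$'s $Y$-edges red and $v_2$'s blue, and then letting the alternating-cycle machinery absorb whichever $X$--$X$ conflict remains.
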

\begin{proof}
    First note that since $d_{\floor{n/2}} = 0$ we have $\chii(G) \geq 2$ due to Fact~\ref{fact:splitlarge-0}.

    Let $\phi' \colon E(G[X]) \to \{\red,\blue\}$ be the strange coloring of $E(G[X])$ for the sequence
    \begin{equation}
        \pi =
        \begin{cases}
            (v_3, \ldots, v_{\floor{n/2}+1}, v_1, v_{\floor{n/2}+2}, v_{\floor{n/2}+3}, v_2, v_{\floor{n/2}+4}, \ldots, v_n) & \text{if } \ceil{n/2} \text{ is even}\\
            (v_3, \ldots, v_{\floor{n/2}+1}, v_1, v_{\floor{n/2}+2},                    v_2, v_{\floor{n/2}+3}, \ldots, v_n) & \text{otherwise} \enspace .
        \end{cases}
    \end{equation}

    We will show how to obtain a locally irregular $2$-edge coloring of $G$ from $\phi'$.
    We start by extending $\phi'$ to a coloring $\phi$ of $E(G)$ with colors red and blue.
    Let $X_\red = \{v_1, v_3, v_4, \ldots, v_{\floor{n/2} - 1}\}$.
    We give color red to all edges between $X_\red$ and $Y$, and color blue to all edges between~$v_2$ and~$Y$.
    Note that this is a coloring of $E(G)$ because $d_{\floor{n/2}} = 0$.

    We start by showing that, in both $G_{\red, \phi}$ and $G_{\blue, \phi}$, there is no conflicting edge $xy$ for $x \in X$ and $y \in Y$.
    By the construction of $\phi$ and since $d_{\floor{n/2}} = 0$, the neighborhood of any vertex $y \in Y$ in $G_{\red, \phi}$ is contained in $X_\red$, and hence $d_{G_{\red, \phi}}(y) \leq \floor{n/2} - 2$.
    Since $d_1 \geq 1$, we have $d_{G_{\red, \phi}}(x) \geq \floor{n/2}$ for all $x \in X_\red$.
    Therefore, $d_{G_{\red, \phi}}(x) > d_{G_{\red, \phi}}(y)$ for any $x \in X$ and $y \in Y$.
    Now note that, also by the construction of $\phi$, the only vertex in $X$ which can have a neighbor in $Y$ is $v_2$, which implies that $d_{G_{\blue, \phi}}(y) \leq 1$ for all $y \in Y$.
    Since $d_2 \geq 1$, we have $d_{G_{\blue, \phi}}(v_2) \geq \floor{n/2} + 1$, so the graph $G_{\blue, \phi}$ has no conflicting edge $xy$ with $x \in X$ and $y \in Y$.
    Therefore, if $\phi$ is not a locally irregular edge coloring, it is because there is a conflicting edge between a pair of vertices in $X$.

    Remark that the only difference between the sequence $\pi$ used to build $\phi$ is the amount of vertices between $v_1$ and $v_2$ in it.
    To avoid being repetitive, in the remainder of this proof we will consider that $\ceil{n/2}$ is odd.
    The case where $n$ is even is analogous.

    Now we show that there exists a $2$-edge coloring $\zeta$ such that $G_{\red, \zeta}$ is locally irregular.
    If $G_{\red, \phi}$ is locally irregular, then let $\zeta = \phi$.
    Thus, suppose that $G_{\red, \phi}$ has at least one conflicting edge $uv$.
    Since 
    \begin{align*}
        d_{G_{\red, \phi'}}(v_3) & > d_{G_{\red, \phi'}}(v_4) > \cdots >  d_{G_{\red, \phi'}}(v_{\floor{n/2} + 1}) > d_{G_{\red, \phi'}}(v_1) \\
                                 & = d_{G_{\red, \phi'}}(v_{\floor{n/2} + 2}) =  d_{G_{\red, \phi'}}(v_2) 
                                 > d_{G_{\red, \phi'}}(v_{\floor{n/2} + 3}) > \cdots > d_{G_{\red, \phi'}}(v_n) \enspace,
    \end{align*}
    
    \begin{equation*}
        d_1 \geq d_2 \geq d_3 \geq \cdots \geq 1 > d_{\floor{n/2}} = \cdots = d_n = 0 \enspace ,
    \end{equation*}
    
    \begin{equation*}
        d_1 \geq d_2 \geq 1 \enspace ,
    \end{equation*}
    and
    \begin{equation*}
        d_{G_{\red, \phi}} (v_i) =  d_{G_{\red, \phi'}} (v_i) + d_i \text{ for } i \neq 2 \quad \tand \quad d_{G_{\red, \phi}} (v_2) =  d_{G_{\red, \phi'}} (v_2) \enspace ,
    \end{equation*}
    we have that
    \begin{equation}
    \label{enu:eq1}
        \begin{array}{rl}
            d_{G_{\red, \phi}}(v_3) & > d_{G_{\red, \phi}}(v_4) > \cdots >  d_{G_{\red, \phi}}(v_{\floor{n/2} + 1}) \\
                                    & > d_{G_{\red, \phi}}(v_{\floor{n/2} + 2}) = d_{G_{\red, \phi}}(v_2)
                                    >  d_{G_{\red, \phi}}(v_{\floor{n/2} + 3}) > \cdots > d_{G_{\red, \phi}}(v_{n})
        \end{array}
    \end{equation}
    and 
    \begin{equation}
        \label{enu:eq2}
        d_{G_{\red, \phi}}(v_1) > d_{G_{\red, \phi}}(v_{\floor{n/2} + 2}) \enspace .
    \end{equation}

    By~\eqref{enu:eq1} and since $\phi(v_{\floor{n/2} + 2}v_2)$ is blue, we may assume that $u = v_1$, and by~\eqref{enu:eq1} and~\eqref{enu:eq2}, we have $v \in \{v_3, \ldots, v_{\floor{n/2} + 1}\}$.
    As a result, $v_1v$ is the only conflicting edge of $G_{\red, \phi}$.
    Moreover, note that the edge $v_1v_{\floor{n/2} + 1}$ cannot be a conflicting edge, since $\phi(v_1v_{\floor{n/2} + 1})$ is blue.
    Therefore, $v \in \{v_3, \ldots, v_{\floor{n/2}}\}$, and thus $d_1 \geq 2$.

    We will show that there exists an alternating cycle $C$ such that $G_{\red,\zeta}$ is locally irregular, where $\zeta$ is the coloring obtained from $\phi$ by inverting $C$.
    We start remarking that
    \begin{itemize}
        \item $\phi(v_1v)$ is red, $d_{G_{\blue,\phi}}(v_1) = \floor{n/2}$, and $d_{G_{\blue,\phi}}(v) \leq \floor{n/2} - 2$,
        \item $\phi(v_{\floor{n/2}+ 1}v_1)$ is blue, $d_{G_{\red, \phi}}(v_{\floor{n/2}+1}) = \ceil{n/2} + d_{\floor{n/2}+1} = \ceil{n/2} + 0$ (because $d_{\floor{n/2}} = 0$ and $d_1 \geq \dots \geq d_n$), and $d_{G_{\red, \phi}}(v_1) = \ceil{n/2} - 1 + d_1 \geq \ceil{n/2} + 1$, since $d_1 \geq 2$.
    \end{itemize}

    We claim that if $v \neq v_3$, then $C_1 = (v_1, v, v_{n - 1}, v_{\floor{n/2} + 1}, v_1)$ is an alternating cycle in $\phi$, otherwise $C_2 = (v_1, v_3, v_k, v_{\floor{n/2} + 1}, v_1)$, where $v_k$ is the only neighbor of $v_3$ in $G_{\blue, \phi}$, is an alternating cycle in $\phi$.
    First, suppose that $v \neq v_3$, and note that $\phi(vv_{n - 1})$ is blue for any $v \in \{v_3, \ldots, v_{\floor{n/2}}\}$, $d_{G_{\red, \phi}}(v) \geq \floor{n/2} + 1$, and $d_{G_{\red, \phi}}(v_{n - 1}) = 2$.
    In addition, note that $\phi(v_{n-1}v_{\floor{n/2} + 1})$ is red, $d_{G_{\blue,\phi}}(v_{n-1}) = n - 3$, $d_{G_{\blue,\phi}}(v_{\floor{n/2} + 1}) = \floor{n/2} - 1$.
    Therefore, $C = C_1$ is the desired alternating cycle if $v \neq v_3$.
    
    Suppose that $v = v_3$, and note that $d_{G_{\red, \phi}}(v_3) \geq n - 2$ and $d_{G_{\red, \phi}}(v_k) \leq \floor{n/2}$.
    Moreover, note that $\phi(v_kv_{\floor{n/2} + 1})$ is red, $d_{G_{\blue,\phi}}(v_k) = \floor{n/2}$, and $d_{G_{\blue,\phi}}(v_{\floor{n/2} + 1}) = \floor{n/2} - 1$.
    Hence, $C = C_2$ is the desired alternating cycle.
    Thus, let $\zeta$ be the coloring obtained from $\phi$ by inverting $C$.
    By Lemma~\ref{lem:alternating-cycle}, we have $d_{G_{\gamma, \zeta}}(v) = d_{G_{\gamma, \phi}}(v)$ for every vertex $v \in V(G)$ and $\gamma \in \{\red, \blue\}$ and no new conflicting edge in $\red$ was created.
    Since $\zeta(v_1v)$ is blue, $G_{\red, \zeta}$ is locally irregular.

    Now we show how to build a locally irregular $2$-edge coloring $\theta$ from $\zeta$.
    Since $G_{\red, \zeta}$ is locally irregular, if $G_{\blue, \zeta}$ is also locally irregular, then let $\theta = \zeta$ and the result follows.
    Thus, $G_{\blue, \zeta}$ has at least one conflicting edge $uv$.

    Note that
    \begin{equation*}
        d_{G_{\blue,\zeta}}(v) = d_{G_{\blue,\phi}}(v) = d_{G_{\blue,\phi'}}(v) \text{ for any } v \neq v_2 
    \end{equation*}
    and
    \begin{equation*}
        d_{G_{\blue,\zeta}}(v_2) = d_{G_{\blue,\phi}}(v_2) = d_{G_{\blue,\phi'}}(v_2) + d_2 \enspace .
    \end{equation*}
    Therefore,
    \begin{equation}
    \label{enu2:eq1}
        \begin{array}{rl}
            d_{G_{\blue, \zeta}}(v_n) & > d_{G_{\blue, \zeta}}(v_{n-1}) > \cdots >  d_{G_{\blue, \zeta}}(v_{\floor{n/2} + 3}) \\
                                     & > d_{G_{\blue, \zeta}}(v_{\floor{n/2} + 2}) = d_{G_{\blue, \zeta}}(v_1) 
                                      > d_{G_{\blue, \zeta}}(v_{\floor{n/2} + 1}) > \cdots > d_{G_{\blue, \zeta}}(v_3) \enspace.
        \end{array}
    \end{equation}

    Since $\zeta(v_{\floor{n/2}+2}v_1)$ is red, the conflicting edges in $G_{\blue,\zeta}$ must involve $v_2$.
    By~\eqref{enu2:eq1}, there can be only one conflicting edge in $G_{\blue,\zeta}$ between $v_2$ and $v \in \{v_{\floor{n/2}+3}, \ldots, v_n\}$.
    Moreover, note that the edge $v_2v_{\floor{n/2} + 3}$ cannot be a conflicting edge, since $\zeta(v_2v_{\floor{n/2} + 3})$ is red.
    Therefore, $v \in \{v_{\floor{n/2}+3}, \ldots, v_n\}$, and thus $d_2 \geq 2$.

    We will show that there exists an alternating cycle $C$ such that $G_{\blue,\theta}$ is locally irregular, where $\theta$ is the coloring obtained from $\zeta$ by inverting $C$.
    We start by remarking that $v_3$ has precisely one neighbor $v_k$ in $G_{\blue, \zeta}$ (it is either $v_1$ or $v_{\floor{n/2}+2}$) and that $\zeta(v_kv_{\floor{n/2} + 1})$ is red.
    Also,
    \begin{itemize}
        \item $d_{G_{\blue,\zeta}}(v_k) = \floor{n/2}$, and $d_{G_{\blue, \zeta}}(v_{\floor{n/2}+1}) = \floor{n/2} - 1$,
        \item $d_{G_{\red, \zeta}}(v_3) \geq n-2$ and either $v_k = v_1$, in which case the construction of $\zeta$ guarantees $d_{G_{\red,\zeta}}(v_1) \neq d_{G_{\red,\zeta}}(v_3)$, or $v_k = v_{\floor{n/2}+2}$, in which case $d_{G_{\blue,\zeta}}(v_{\floor{n/2}+2}) = \floor{n/2}$.
    \end{itemize}

    Note that $C = (v_2, v, v_3, v_k, v_{\floor{n/2}+1}, v_{\floor{n/2}+3}, v_2)$ is an alternating cycle in $\zeta$:
    \begin{itemize}
        \item $\zeta(v_2v) = \blue$, $d_{G_{\red,\zeta}}(v_2) = \floor{n/2}$, and $d_{G_{\red,\zeta}}(v) < \floor{n/2}$;
        \item $\zeta(vv_3) = \red$, $d_{G_{\blue,\zeta}}(v) > \floor{n/2}$, and $d_{G_{\blue,\zeta}}(v_3) = 1$;
        \item $\zeta(v_{\floor{n/2}+1}v_{\floor{n/2}+3}) = \blue$, $d_{G_{\red,\zeta}}(v_{\floor{n/2}+1}) \geq \floor{n/2}$, and $d_{G_{\red,\zeta}}(v_{\floor{n/2}+3}) \leq \floor{n/2}-1$; and
        \item $\zeta(v_{\floor{n/2}+3}v_2) = \red$, $d_{G_{\blue,\zeta}}(v_{\floor{n/2}+3}) \leq d_{G_{\blue,\zeta}}(v_2) + 1$.
    \end{itemize}

    Thus, let $\theta$ be the coloring obtained from $\zeta$ by inverting $C$.
    By Lemma~\ref{lem:alternating-cycle}, we have $d_{G_{\gamma, \theta}}(v) = d_{G_{\gamma, \zeta}}(v)$ for every vertex $v \in V(G)$ and $\gamma \in \{\red, \blue\}$ and no new conflicting edge in $\red$ or $\blue$ was created.
    Since $\theta(v_2v)$ is red, $G_{\blue, \theta}$ is locally irregular.
    It follows that $\theta$ is a locally irregular $2$-edge coloring of $G$.
\end{proof}%

\section{Decomposing split graphs with a small maximal clique}
\label{sec:small-split}

For completeness we also describe the characterization of the irregular chromatic index of split graphs that have a maximal clique with at most~9 vertices.

\begin{theorem}
\label{thm:small}
    Let $G(X,Y)$ be a split graph with $X = \{v_1, \ldots, v_n\}$ where $d_1 \geq \cdots \geq d_n$.
    If $n\leq 9$, then the following holds
\begin{enumerate}[label=\rmlabel]
    \item\label{it:main-2-1} $G$ is not decomposable if $G$ is the $K_2$, $K_3$ or, $P_4$ (the path with~$4$ vertices);
    \item If $G$ is decomposable, then 
    \begin{enumerate}
      \item\label{it:main-2-0} If $d_1 > d_2 > \cdots > d_n$, then $\chii(G) = 1$;
      \item\label{it:main-2-4} If $3 \leq n \leq 9$ and $\sum_{i=1}^{n} d_i \geq \floor{n/2}$, then $\chii(G) = 2$;
      \item\label{it:main-2-6} If $8 \leq n \leq 9$, $\sum_{i=1}^{3} d_i = 3$, and $d_2 \geq 1$, then $\chii(G) = 2$;
      \item\label{it:main-2-7} If $n = 9$ and $d_1 = d_2 = 1$, then $\chii(G) = 2$;
      \item\label{it:main-2-5} For all the other cases, it follows that $\chii(G) = 3$.
    \end{enumerate}
\end{enumerate}
\end{theorem}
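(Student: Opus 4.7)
The plan is to prove Theorem~\ref{thm:small} by an exhaustive case analysis on $n\in\{2,3,\ldots,9\}$ and on the degree sequence $(d_1,\ldots,d_n)$, using the same colorings and local operations developed in Section~\ref{sec:large-split} but tailored to the small-clique regime. Part~\ref{it:main-2-1} is obtained by invoking the characterization of non-decomposable graphs from~\cite{BaBePrWo15}: the only non-decomposable split graphs with $n\leq 9$ are $K_2$, $K_3$, and $P_4$. Part~\ref{it:main-2-0} is immediate from Fact~\ref{fact:splitlarge-0}. For the remaining parts I assume $G$ is decomposable and that $(d_1,\ldots,d_n)$ is not strictly decreasing, which already gives $\chii(G)\geq 2$.

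For the positive parts~\ref{it:main-2-4},~\ref{it:main-2-6}, and~\ref{it:main-2-7} I would exhibit explicit $2$-edge colorings. The construction for~\ref{it:main-2-4} mimics Lemma~\ref{lemma:splitlarge-2}: start from a normal coloring of $E(G[X])$ and distribute the edges incident to $Y$ between the two colors so as to eliminate the unique conflicting edge in the clique; the hypothesis $\sum_{i=1}^n d_i \geq \floor{n/2}$ provides just enough edges to perform this re-coloring, possibly together with an alternating-cycle swap as in Lemma~\ref{lem:alternating-cycle} to resolve a last residual conflict. Cases~\ref{it:main-2-6} and~\ref{it:main-2-7} are very restrictive, since only a handful of degree patterns are compatible with the hypotheses; I would produce the required coloring by direct construction in each pattern.

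For part~\ref{it:main-2-5}, the lower bound $\chii(G)\geq 3$ follows the lines of Claim~\ref{claim:splitlarge-1:chimaior}: a hypothetical locally irregular $2$-edge coloring $\phi$ would force each of $G_{\red,\phi}[X]$ and $G_{\blue,\phi}[X]$ to contain at most one pair of equal-degree vertices, and Proposition~\ref{prop:n-1-distinct-degrees} combined with the Pigeonhole Principle then produces a conflict that cannot be absorbed by the few edges going to $Y$ once $G$ falls outside~\ref{it:main-2-4}--\ref{it:main-2-7}. The upper bound $\chii(G)\leq 3$ is obtained exactly as in Claim~\ref{claim:splitlarge-1:chimenor}: reserve color green for all edges incident to $v_1$ together with one extra edge, and color the remaining edges of $E(G[X])$ according to a normal $2$-coloring.

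The main obstacle will be the lower bound $\chii(G)\geq 3$ for the small-$n$ instances in~\ref{it:main-2-5}, since the argument of Claim~\ref{claim:splitlarge-1:chimaior} relies on having a clique large enough to apply Proposition~\ref{prop:n-1-distinct-degrees} meaningfully. When $n$ is small, however, the number of edge-sparse degree patterns is still modest (we have $n\leq 9$ and $d_1\leq n-1$), so the analysis reduces to a finite, possibly computer-assisted, enumeration of the remaining sequences.
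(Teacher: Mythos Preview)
Your proposal is essentially the paper's approach: case-by-case analysis on $n$, normal colorings of $G[X]$ extended to the $Y$-edges for the upper bounds in~\ref{it:main-2-4}--\ref{it:main-2-7}, Lemma~\ref{lemma:splitlarge-1} for the lower bound in~\ref{it:main-2-5} whenever $d_2=0$, and computer verification for the residual lower-bound instances. One sharpening is worth noting: the argument of Claim~\ref{claim:splitlarge-1:chimaior} genuinely requires $d_2=0$ (it hinges on the fact that only $v_1$ can have its $H_\red$- and $H_\blue$-degrees perturbed by edges to $Y$), so it does not extend, even partially, to the $d_2\geq 1$ instances of~\ref{it:main-2-5}; the paper disposes of precisely those---$n\in\{6,7\}$ with $d_1=d_2=1$, and $n=8$ with $d_1=d_2=1$, $d_3=0$---by computer search, exactly as you anticipate in your final paragraph.
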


\begin{proof}%
    If $|V(G)|~= 2$, then $G \simeq K_2$, and hence~\ref{it:main-2-1} holds.
    Thus, we may assume that $|V(G)|~\geq 3$.
    By Fact~\ref{fact:splitlarge-0}, we may assume that $n \geq 2$ and that there is an $i$ with $1 \leq i \leq n - 1$, such that $d_i = d_{i + 1}$, otherwise~\ref{it:main-2-0} holds.
    Moreover, from now on we know that if $G$ is decomposable, then $\chii(G) \geq 2$.

    First suppose that $n = 2$, and hence $d_1 = d_2$.
    If $d_1 = d_2 = 1$, then $G \simeq P_4$, since $X$ is a maximal clique, and, as a result,~\ref{it:main-2-1}~holds.
    Otherwise, $d_1 = d_2 \geq 2$, and hence $G$ is a bistar, and it is not hard to see that there exists a locally irregular 2-edge coloring for $G$.
    Thus, $\chii(G) = 2$ and~\ref{it:main-2-4} holds.
    Suppose $n = 3$.
    If $d_1 = 0$, then $G \simeq K_3$, and hence~\ref{it:main-2-1} holds.
    If $d_1 \geq 1$, then $\chii(G) \leq 2$ by Lemma~\ref{lemma:splitlarge-2}, and hence~\ref{it:main-2-4} follows.

    Let $n \in \{4, 5\}$.
    If $d_2 \geq 1$, then $\chii(G) = 2$ by Lemma~\ref{lemma:splitlarge-2}, and hence~\ref{it:main-2-4} follows.
    So we may assume that $d_2 = 0$.
    If $d_1 = 1$, then by Lemma~\ref{lemma:splitlarge-1}, we have $\chi(G) = 3$ and thus~\ref{it:main-2-5} follows.
    If $d_1 \geq 2$, then let $\phi \colon E(G[X]) \to \{\red, \blue\}$ be a normal coloring to the sequence $(v_2, v_1, v_4, v_3)$ if $n = 4$, or the sequence $(v_2, v_3, v_1, v_4, v_5)$ if $n = 5$.
    Let $\phi' \colon E(G) \to \{\red, \blue\}$ be the coloring obtained from $\phi$ by giving the color $\phi(v_1v_4)$ to all the edges in $E(v_1, Y)$.
    Note that the largest degree in $G_{\phi(v_1v_4), \phi}$ is $3$, and $d_{G_{\phi(v_1v_4),\phi'}}(v_1) \geq 4$.
    Thus, $\phi'$ is a locally irregular 2-edge coloring for $G$, i.e., $\chii(G) = 2$ and hence~\ref{it:main-2-4} follows.

    Let $n \in \{6, 7\}$.
    If $d_3 \geq 1$, then $\chii(G) = 2$ by Lemma~\ref{lemma:splitlarge-2}, and hence~\ref{it:main-2-4} follows.
    So we may assume that $d_3 = 0$.
    If $d_1 < 3$ and $d_2 = 0$, then $\chii(G) = 3$ by Lemma~\ref{lemma:splitlarge-1} and thus~\ref{it:main-2-5} follows.
    Thus, $d_1 \geq 3$ or $d_2 \geq 1$.
    If $d_1 \geq 3$, then let $\phi \colon E(G[X]) \to \{\red,\blue\}$ be a normal coloring to the sequence $(v_2, v_3, v_4, v_1, v_5, v_6)$ if $n = 6$, or the sequence $(v_2, v_3, v_4, v_1, v_5, v_6, v_7)$ if $n = 7$.
    Let $\phi' \colon E(G) \to \{\red, \blue\}$ be the coloring obtained from $\phi$ by giving the color $\phi(v_1v_{\ceil{n/2}+1})$ to all edges in $E(v_1, Y)$ and the remaining color to all edges in $E(v_2, Y)$.
    It is easy to see that $\phi'$ is a locally irregular 2-edge coloring for $G$, and hence~\ref{it:main-2-4} follows.
    Thus, we may consider $1 \leq d_2 \leq d_1 < 3$.
    If $d_1 = d_2 = 2$ or $d_1 = 2$ and $d_2 = 1$, then let $\phi$ be defined as above and let $\phi'' \colon E(G) \to \{\red, \blue\}$ be the coloring obtained from $\phi$ by giving the color $\phi(v_1v_{\ceil{n/2}+1})$ to all edges in $E(v_1, Y)$ and in $E(v_2, Y)$.
    It is easy to see that $\phi''$ is a locally irregular 2-edge coloring for $G$, and hence~\ref{it:main-2-4} follows.
    The last case is when $d_1 = d_2 = 1$.
    Aided by a computer program, we verified that $\chii(G) = 3$ in this case, and thus~\ref{it:main-2-5} follows. 

    At last, suppose that $n \in \{8, 9\}$.
    If $d_4 \geq 1$, then $\chii(G) = 2$ by Lemma~\ref{lemma:splitlarge-2}, and hence~\ref{it:main-2-4} follows.
    So we may assume that $d_4 = 0$.
    If $d_1 < 4$ and $d_2 = 0$, then $\chii(G) = 3$ by Lemma~\ref{lemma:splitlarge-1} and thus~\ref{it:main-2-5} follows.
    Thus, $d_1 \geq 4$ or $d_2 \geq 1$.
    If $d_1 \geq 4$, then let $\phi \colon E(G[X]) \to \{\red,\blue\}$ be a normal coloring to the sequence $(v_2, v_4, v_5, v_1, v_6, v_7, v_8, v_3)$ if $n = 8$, or the sequence $(v_2, v_3, v_4, v_5, v_1, v_6, v_7, v_8, v_9)$ if $n = 9$.
    Let $\phi' \colon E(G) \to \{\red, \blue\}$ be the coloring obtained from $\phi$ by giving the color $\phi(v_1v_6)$ to all edges in $E(v_1, Y)$ and the remaining color to all edges in $E(v_2, Y)$ and in $E(v_3, Y)$.
    It is easy to see that $\phi'$ is a locally irregular 2-edge coloring for $G$, and hence~\ref{it:main-2-4} follows.
    Thus, we may consider $0 \leq d_3 \leq d_2 \leq d_1 < 4$ and also $d_2 \geq 1$.
    Note that there are 16 combinations of values for $d_1$, $d_2$, and $d_3$, which we divide in four cases.
    For each of the first three cases, we obtain a normal coloring $\phi \colon E(G[X]) \to \{\red, \blue\}$ to some sequence $S$ of vertices, which we describe below:
    \begin{enumerate}
        \item if $d_1 = 3$, then we have
        \begin{enumerate}
	        	\item $S=(v_2, v_4, v_5, v_1, v_6=w, v_7, v_8, v_3)$ if $n = 8$, or
		\item $S=(v_3, v_4, v_5, v_6, v_1, v_7=w, v_8, v_9, v_2)$ if $n = 9$.
        \end{enumerate}

        \item if $d_1 = d_2 = 2$, then we have
        \begin{enumerate}
        		\item $S=(v_4, v_2, v_5, v_1, v_6=w, v_7, v_8, v_3)$ if $n=8$, or 
		\item $S=(v_3, v_4, v_5, v_6, v_1, v_7=w, v_8, v_2, v_9)$ if $n=9$.
        \end{enumerate}

        \item if $d_1 = 2$ and $d_2 = d_3 = 1$, then we have 
                \begin{enumerate}
        		\item $S=(v_2, v_3, v_4, v_1, v_5=w, v_6, v_7, v_8)$ if $n=8$, or 
		\item $S=(v_4, v_5, v_6, v_7, v_1, v_8=w, v_9, v_2, v_3)$ if $n=9$.
        \end{enumerate}
    \end{enumerate}
    Then we obtain $\phi'$ from $\phi$ by giving the color $\phi(v_1,v_{\ceil{n/2}+2})$ to all edges in $E(v_1,Y)$ and in $E(v_2,Y)$ and the remaining color to all edges in $E(v_3,Y)$.
    It is not hard to see that $\phi'$ is a locally irregular 2-edge coloring for $G$, and hence~\ref{it:main-2-4} follows for the first three cases.
    The last case considers (i) $d_1 = 2$, $d_2 = 1$, and $d_3 = 0$, (ii) $d_1 = d_2 = d_3 = 1$, and (iii) $d_1 = d_2 = 1$ and $d_3 = 0$.
    Figures~\ref{fig:small-especial-case-1} and~\ref{fig:small-especial-case-2}  show that $\chii(G) = 2$ for (i) and (ii), and hence~\ref{it:main-2-6} holds for these case.
    Figures~\ref{fig:small-especial-case-3}  shows that $\chii(G) = 2$ for (iii) when $n = 9$ and hence~\ref{it:main-2-7} holds for this case. 
    Also aided by a computer program, we verified that $\chii(G) = 3$ for (iii) when $n = 8$, and thus~\ref{it:main-2-5} follows. 
\end{proof}%

\begin{figure}[ht!]
      \centering
      \begin{subfigure}[b]{\linewidth}
        \centering\scalebox{.8}{%

\begin{tikzpicture}[scale=0.8]
    \pgfmathsetmacro{\qtd}{int(10)}
    \pgfmathsetmacro{\raio}{2.7}

    \tikzset{black vertex/.style={circle,draw,minimum size=2mm,inner sep=0pt,outer sep=1pt,fill=black, color=black}}
    \tikzset{square vertex/.style={rectangle,draw,minimum size=2mm,inner sep=0pt,outer sep=1pt,fill=black, color=black}}
    \tikzstyle{red edge}=[line width=1, red]
    \tikzstyle{blue edge}=[line width=1, blue]

    \pgfmathsetmacro{\espaco}{360/10}
    \pgfmathsetmacro{\espacoMetade}{360/(10*2)}
    \pgfmathsetmacro{\rot}{-\espaco - 90 + \espacoMetade}
    
    \foreach \x in {1,...,10} {
        \pgfmathsetmacro{\ang}{\x*\espaco+\rot}
        \ifthenelse{ \equal{\x}{9} } {
          \node (\x) [square vertex] at (\ang:\raio) {}; 
        }{
          \node (\x) [black vertex] at (\ang:\raio) {};
        }
    }
    \draw[red edge] (1) -- (3);
    \draw[red edge] (1) -- (5);
    \draw[red edge] (1) -- (6);
    \draw[red edge] (2) -- (5);
    \draw[red edge] (2) -- (6);
    \draw[red edge] (2) -- (7);
    \draw[red edge] (2) -- (8);
    \draw[red edge] (3) -- (5);
    \draw[red edge] (3) -- (6);
    \draw[red edge] (3) -- (8);
    \draw[red edge] (4) -- (5);
    \draw[red edge] (4) -- (8);
    \draw[red edge] (5) -- (6);
    \draw[red edge] (5) -- (7);
    \draw[red edge] (5) -- (8);
    \draw[red edge] (6) -- (7);
    \draw[red edge] (6) -- (8);
    \draw[blue edge] (1) -- (2);
    \draw[blue edge] (1) -- (4);
    \draw[blue edge] (1) -- (7);
    \draw[blue edge] (1) -- (8);
    \draw[blue edge] (1) -- (9);
    \draw[blue edge] (1) -- (10);
    \draw[blue edge] (2) -- (3);
    \draw[blue edge] (2) -- (4);
    \draw[blue edge] (2) -- (9);
    \draw[blue edge] (3) -- (4);
    \draw[blue edge] (3) -- (7);
    \draw[blue edge] (4) -- (6);
    \draw[blue edge] (4) -- (7);
    \draw[blue edge] (7) -- (8);

    \node (1_label) [ ] at (1 * \espaco+\rot:\raio + 1) {$v_{1}$ ({\color{red} $3$}, {\color{blue} $6$})};
    \node (2_label) [ ] at (2 * \espaco+\rot:\raio + 1) {$v_{2}$ ({\color{red} $4$}, {\color{blue} $4$})};
    \node (3_label) [ ] at (3 * \espaco+\rot:\raio + 1) {$v_{3}$ ({\color{red} $4$}, {\color{blue} $3$})};
    \node (4_label) [ ] at (4 * \espaco+\rot:\raio + 1) {$v_{4}$ ({\color{red} $2$}, {\color{blue} $5$})};
    \node (5_label) [ ] at (5 * \espaco+\rot:\raio + 1) {$v_{5}$ ({\color{red} $7$}, {\color{blue} $0$})};
    \node (6_label) [ ] at (6 * \espaco+\rot:\raio + 1) {$v_{6}$ ({\color{red} $6$}, {\color{blue} $1$})};
    \node (7_label) [ ] at (7 * \espaco+\rot:\raio + 1) {$v_{7}$ ({\color{red} $3$}, {\color{blue} $4$})};
    \node (8_label) [ ] at (8 * \espaco+\rot:\raio + 1) {$v_{8}$ ({\color{red} $5$}, {\color{blue} $2$})};
    \node (9_label) [ ] at (9 * \espaco+\rot:\raio + 1) {$y_{2}$ ({\color{red} $0$}, {\color{blue} $2$})};
    \node (10_label) [ ] at (10 * \espaco+\rot:\raio + 1) {$y_{1}$ ({\color{red} $0$}, {\color{blue} $1$})};
    
\end{tikzpicture}

}
        \hfill  
        \centering\scalebox{.8}{%

\begin{tikzpicture}[scale=0.8]
    \pgfmathsetmacro{\qtd}{int(11)}
    \pgfmathsetmacro{\raio}{2.7}

    \tikzset{black vertex/.style={circle,draw,minimum size=2mm,inner sep=0pt,outer sep=1pt,fill=black, color=black}}
    \tikzset{square vertex/.style={rectangle,draw,minimum size=2mm,inner sep=0pt,outer sep=1pt,fill=black, color=black}}
    \tikzstyle{red edge}=[line width=1, red]
    \tikzstyle{blue edge}=[line width=1, blue]

    \pgfmathsetmacro{\espaco}{360/11}
    \pgfmathsetmacro{\espacoMetade}{360/(11*2)}
    \pgfmathsetmacro{\rot}{-\espaco - 90 + \espacoMetade}
    
    \foreach \x in {1,...,11} {
      \pgfmathsetmacro{\ang}{\x*\espaco+\rot}
      
      \ifthenelse{ \equal{\x}{10} } {
        \node (\x) [square vertex] at (\ang:\raio) {}; 
      }{
        \node (\x) [black vertex] at (\ang:\raio) {};
      }
    }
    
    \draw[red edge] (1) -- (3);
    \draw[red edge] (1) -- (5);
    \draw[red edge] (1) -- (8);
    \draw[red edge] (2) -- (3);
    \draw[red edge] (2) -- (4);
    \draw[red edge] (2) -- (8);
    \draw[red edge] (2) -- (9);
    \draw[red edge] (3) -- (4);
    \draw[red edge] (3) -- (5);
    \draw[red edge] (3) -- (6);
    \draw[red edge] (3) -- (8);
    \draw[red edge] (3) -- (9);
    \draw[red edge] (4) -- (8);
    \draw[red edge] (5) -- (6);
    \draw[red edge] (5) -- (7);
    \draw[red edge] (5) -- (8);
    \draw[red edge] (5) -- (9);
    \draw[red edge] (6) -- (8);
    \draw[red edge] (6) -- (9);
    \draw[red edge] (7) -- (8);
    \draw[red edge] (8) -- (9);
    \draw[blue edge] (1) -- (2);
    \draw[blue edge] (1) -- (4);
    \draw[blue edge] (1) -- (6);
    \draw[blue edge] (1) -- (7);
    \draw[blue edge] (1) -- (9);
    \draw[blue edge] (1) -- (10);
    \draw[blue edge] (1) -- (11);
    \draw[blue edge] (2) -- (5);
    \draw[blue edge] (2) -- (6);
    \draw[blue edge] (2) -- (7);
    \draw[blue edge] (2) -- (10);
    \draw[blue edge] (3) -- (7);
    \draw[blue edge] (4) -- (5);
    \draw[blue edge] (4) -- (6);
    \draw[blue edge] (4) -- (7);
    \draw[blue edge] (4) -- (9);
    \draw[blue edge] (6) -- (7);
    \draw[blue edge] (7) -- (9);
    
    \node (1_label) [ ] at (1 * \espaco+\rot:\raio + 1) {$v_{1}$ ({\color{red} $3$}, {\color{blue} $7$})};
    \node (2_label) [ ] at (2 * \espaco+\rot:\raio + 1) {$v_{2}$ ({\color{red} $4$}, {\color{blue} $5$})};
    \node (3_label) [ ] at (3 * \espaco+\rot:\raio + 1) {$v_{3}$ ({\color{red} $7$}, {\color{blue} $1$})};
    \node (4_label) [ ] at (4 * \espaco+\rot:\raio + 1) {$v_{4}$ ({\color{red} $3$}, {\color{blue} $5$})};
    \node (5_label) [ ] at (5 * \espaco+\rot:\raio + 1) {$v_{5}$ ({\color{red} $6$}, {\color{blue} $2$})};
    \node (6_label) [ ] at (6 * \espaco+\rot:\raio + 1) {$v_{6}$ ({\color{red} $4$}, {\color{blue} $4$})};
    \node (7_label) [ ] at (7 * \espaco+\rot:\raio + 1) {$v_{7}$ ({\color{red} $2$}, {\color{blue} $6$})};
    \node (8_label) [ ] at (8 * \espaco+\rot:\raio + 1) {$v_{8}$ ({\color{red} $8$}, {\color{blue} $0$})};
    \node (9_label) [ ] at (9 * \espaco+\rot:\raio + 1) {$v_{9}$ ({\color{red} $5$}, {\color{blue} $3$})};
    \node (10_label) [ ] at (10 * \espaco+\rot:\raio + 1) {$y_{2}$ ({\color{red} $0$}, {\color{blue} $2$})};
    \node (11_label) [ ] at (11 * \espaco+\rot:\raio + 1) {$y_{1}$ ({\color{red} $0$}, {\color{blue} $1$})};
    
\end{tikzpicture}
}
        \caption{}\label{fig:small-especial-case-1}
      \end{subfigure}%
      
      \begin{subfigure}[b]{\linewidth}
        \centering\scalebox{.8}{%

\begin{tikzpicture}[scale=0.8]
    \pgfmathsetmacro{\qtd}{int(9)}
    \pgfmathsetmacro{\raio}{2.7}

    \tikzset{black vertex/.style={circle,draw,minimum size=2mm,inner sep=0pt,outer sep=1pt,fill=black, color=black}}
    \tikzset{square vertex/.style={rectangle,draw,minimum size=2mm,inner sep=0pt,outer sep=1pt,fill=black, color=black}}
    \tikzstyle{red edge}=[line width=1, red]
    \tikzstyle{blue edge}=[line width=1, blue]

    \pgfmathsetmacro{\espaco}{360/9}
    \pgfmathsetmacro{\espacoMetade}{360/(9*2)}
    \pgfmathsetmacro{\rot}{-\espaco - 90 + \espacoMetade}
\foreach \x in {1,...,9} {
    \pgfmathsetmacro{\ang}{\x*\espaco+\rot}
    
    \ifthenelse{ \equal{\x}{9} } {
      \node (\x) [square vertex] at (\ang:\raio) {}; 
    }{
      \node (\x) [black vertex] at (\ang:\raio) {};
    }
    
    \node (\x) [black vertex] at (\ang:\raio) {};
}
\draw[red edge] (1) -- (4);
\draw[red edge] (1) -- (6);
\draw[red edge] (1) -- (8);
\draw[red edge] (2) -- (3);
\draw[red edge] (2) -- (4);
\draw[red edge] (2) -- (6);
\draw[red edge] (2) -- (8);
\draw[red edge] (3) -- (6);
\draw[red edge] (3) -- (7);
\draw[red edge] (3) -- (8);
\draw[red edge] (3) -- (9);
\draw[red edge] (4) -- (6);
\draw[red edge] (4) -- (7);
\draw[red edge] (4) -- (8);
\draw[red edge] (5) -- (8);
\draw[red edge] (6) -- (7);
\draw[red edge] (6) -- (8);
\draw[red edge] (7) -- (8);
\draw[blue edge] (1) -- (2);
\draw[blue edge] (1) -- (3);
\draw[blue edge] (1) -- (5);
\draw[blue edge] (1) -- (7);
\draw[blue edge] (1) -- (9);
\draw[blue edge] (2) -- (5);
\draw[blue edge] (2) -- (7);
\draw[blue edge] (2) -- (9);
\draw[blue edge] (3) -- (4);
\draw[blue edge] (3) -- (5);
\draw[blue edge] (4) -- (5);
\draw[blue edge] (5) -- (6);
\draw[blue edge] (5) -- (7);
        \node (1_label) [ ] at (1 * \espaco+\rot:\raio + 1) {$v_{1}$ ({\color{red} $3$}, {\color{blue} $5$})};
        \node (2_label) [ ] at (2 * \espaco+\rot:\raio + 1) {$v_{2}$ ({\color{red} $4$}, {\color{blue} $4$})};
        \node (3_label) [ ] at (3 * \espaco+\rot:\raio + 1) {$v_{3}$ ({\color{red} $5$}, {\color{blue} $3$})};
        \node (4_label) [ ] at (4 * \espaco+\rot:\raio + 1) {$v_{4}$ ({\color{red} $5$}, {\color{blue} $2$})};
        \node (5_label) [ ] at (5 * \espaco+\rot:\raio + 1) {$v_{5}$ ({\color{red} $1$}, {\color{blue} $6$})};
        \node (6_label) [ ] at (6 * \espaco+\rot:\raio + 1) {$v_{6}$ ({\color{red} $6$}, {\color{blue} $1$})};
        \node (7_label) [ ] at (7 * \espaco+\rot:\raio + 1) {$v_{7}$ ({\color{red} $4$}, {\color{blue} $3$})};
        \node (8_label) [ ] at (8 * \espaco+\rot:\raio + 1) {$v_{8}$ ({\color{red} $7$}, {\color{blue} $0$})};
        \node (9_label) [ ] at (9 * \espaco+\rot:\raio + 1) {$y_{1}$ ({\color{red} $1$}, {\color{blue} $2$})};
\end{tikzpicture}
}
        \hfill
        \centering\scalebox{.8}{%

\begin{tikzpicture}[scale=0.8]
    \pgfmathsetmacro{\qtd}{int(10)}
    \pgfmathsetmacro{\raio}{2.7}

    \tikzset{black vertex/.style={circle,draw,minimum size=2mm,inner sep=0pt,outer sep=1pt,fill=black, color=black}}
    \tikzset{square vertex/.style={rectangle,draw,minimum size=2mm,inner sep=0pt,outer sep=1pt,fill=black, color=black}}
    \tikzstyle{red edge}=[line width=1, red]
    \tikzstyle{blue edge}=[line width=1, blue]

    \pgfmathsetmacro{\espaco}{360/10}
    \pgfmathsetmacro{\espacoMetade}{360/(10*2)}
    \pgfmathsetmacro{\rot}{-\espaco - 90 + \espacoMetade}
    \foreach \x in {1,...,10} {
        \pgfmathsetmacro{\ang}{\x*\espaco+\rot}
        
        \ifthenelse{ \equal{\x}{10} } {
          \node (\x) [square vertex] at (\ang:\raio) {}; 
        }{
          \node (\x) [black vertex] at (\ang:\raio) {};
        }
    }
    
    \draw[red edge] (1) -- (2);
    \draw[red edge] (1) -- (3);
    \draw[red edge] (1) -- (4);
    \draw[red edge] (1) -- (6);
    \draw[red edge] (1) -- (7);
    \draw[red edge] (1) -- (8);
    \draw[red edge] (1) -- (9);
    \draw[red edge] (1) -- (10);
    \draw[red edge] (2) -- (3);
    \draw[red edge] (2) -- (4);
    \draw[red edge] (2) -- (6);
    \draw[red edge] (2) -- (8);
    \draw[red edge] (2) -- (9);
    \draw[red edge] (2) -- (10);
    \draw[red edge] (3) -- (4);
    \draw[red edge] (3) -- (8);
    \draw[red edge] (3) -- (10);
    \draw[red edge] (4) -- (8);
    \draw[red edge] (5) -- (9);
    \draw[red edge] (6) -- (9);
    \draw[red edge] (7) -- (8);
    \draw[red edge] (8) -- (9);
    \draw[blue edge] (1) -- (5);
    \draw[blue edge] (2) -- (5);
    \draw[blue edge] (2) -- (7);
    \draw[blue edge] (3) -- (5);
    \draw[blue edge] (3) -- (6);
    \draw[blue edge] (3) -- (7);
    \draw[blue edge] (3) -- (9);
    \draw[blue edge] (4) -- (5);
    \draw[blue edge] (4) -- (6);
    \draw[blue edge] (4) -- (7);
    \draw[blue edge] (4) -- (9);
    \draw[blue edge] (5) -- (6);
    \draw[blue edge] (5) -- (7);
    \draw[blue edge] (5) -- (8);
    \draw[blue edge] (6) -- (7);
    \draw[blue edge] (6) -- (8);
    \draw[blue edge] (7) -- (9);

    \node (1_label) [ ] at (1 * \espaco+\rot:\raio + 1) {$v_{1}$ ({\color{red} $8$}, {\color{blue} $1$})};
    \node (2_label) [ ] at (2 * \espaco+\rot:\raio + 1) {$v_{2}$ ({\color{red} $7$}, {\color{blue} $2$})};
    \node (3_label) [ ] at (3 * \espaco+\rot:\raio + 1) {$v_{3}$ ({\color{red} $5$}, {\color{blue} $4$})};
    \node (4_label) [ ] at (4 * \espaco+\rot:\raio + 1) {$v_{4}$ ({\color{red} $4$}, {\color{blue} $4$})};
    \node (5_label) [ ] at (5 * \espaco+\rot:\raio + 1) {$v_{5}$ ({\color{red} $1$}, {\color{blue} $7$})};
    \node (6_label) [ ] at (6 * \espaco+\rot:\raio + 1) {$v_{6}$ ({\color{red} $3$}, {\color{blue} $5$})};
    \node (7_label) [ ] at (7 * \espaco+\rot:\raio + 1) {$v_{7}$ ({\color{red} $2$}, {\color{blue} $6$})};
    \node (8_label) [ ] at (8 * \espaco+\rot:\raio + 1) {$v_{8}$ ({\color{red} $6$}, {\color{blue} $2$})};
    \node (9_label) [ ] at (9 * \espaco+\rot:\raio + 1) {$v_{9}$ ({\color{red} $5$}, {\color{blue} $3$})};
    \node (10_label) [ ] at (10 * \espaco+\rot:\raio + 1) {$y_{1}$ ({\color{red} $3$}, {\color{blue} $0$})};
\end{tikzpicture}
}
        \caption{}\label{fig:small-especial-case-2}
      \end{subfigure}%

      \begin{subfigure}[b]{.5\linewidth}
        \centering\scalebox{.8}{%

\begin{tikzpicture}[scale=0.8]

    \pgfmathsetmacro{\qtd}{int(10)}
    \pgfmathsetmacro{\raio}{2.7}

    \tikzset{black vertex/.style={circle,draw,minimum size=2mm,inner sep=0pt,outer sep=1pt,fill=black, color=black}}
    \tikzset{square vertex/.style={rectangle,draw,minimum size=2mm,inner sep=0pt,outer sep=1pt,fill=black, color=black}}
    \tikzstyle{red edge}=[line width=1, red]
    \tikzstyle{blue edge}=[line width=1, blue]

    \pgfmathsetmacro{\espaco}{360/10}
    \pgfmathsetmacro{\espacoMetade}{360/(10*2)}
    \pgfmathsetmacro{\rot}{-\espaco - 90 + \espacoMetade}

    \foreach \x in {1,...,10} {
      \pgfmathsetmacro{\ang}{\x*\espaco+\rot}
      \node (\x) [black vertex] at (\ang:\raio) {};
      
      \ifthenelse{ \equal{\x}{10} } {
        \node (\x) [square vertex] at (\ang:\raio) {}; 
      }{
        \node (\x) [black vertex] at (\ang:\raio) {};
      }
    }
    
    \draw[red edge] (1) -- (4);
    \draw[red edge] (1) -- (6);
    \draw[red edge] (1) -- (8);
    \draw[red edge] (1) -- (9);
    \draw[red edge] (1) -- (10);
    \draw[red edge] (2) -- (3);
    \draw[red edge] (2) -- (4);
    \draw[red edge] (2) -- (7);
    \draw[red edge] (2) -- (9);
    \draw[red edge] (3) -- (4);
    \draw[red edge] (3) -- (7);
    \draw[red edge] (3) -- (8);
    \draw[red edge] (3) -- (9);
    \draw[red edge] (4) -- (5);
    \draw[red edge] (4) -- (7);
    \draw[red edge] (4) -- (8);
    \draw[red edge] (4) -- (9);
    \draw[red edge] (6) -- (9);
    \draw[red edge] (8) -- (9);
    \draw[blue edge] (1) -- (2);
    \draw[blue edge] (1) -- (3);
    \draw[blue edge] (1) -- (5);
    \draw[blue edge] (1) -- (7);
    \draw[blue edge] (2) -- (5);
    \draw[blue edge] (2) -- (6);
    \draw[blue edge] (2) -- (8);
    \draw[blue edge] (2) -- (10);
    \draw[blue edge] (3) -- (5);
    \draw[blue edge] (3) -- (6);
    \draw[blue edge] (4) -- (6);
    \draw[blue edge] (5) -- (6);
    \draw[blue edge] (5) -- (7);
    \draw[blue edge] (5) -- (8);
    \draw[blue edge] (5) -- (9);
    \draw[blue edge] (6) -- (7);
    \draw[blue edge] (6) -- (8);
    \draw[blue edge] (7) -- (8);
    \draw[blue edge] (7) -- (9);

    \node (1_label) [ ] at (1 * \espaco+\rot:\raio + 1) {$v_{1}$ ({\color{red} $5$}, {\color{blue} $4$})};
    \node (2_label) [ ] at (2 * \espaco+\rot:\raio + 1) {$v_{2}$ ({\color{red} $4$}, {\color{blue} $5$})};
    \node (3_label) [ ] at (3 * \espaco+\rot:\raio + 1) {$v_{3}$ ({\color{red} $5$}, {\color{blue} $3$})};
    \node (4_label) [ ] at (4 * \espaco+\rot:\raio + 1) {$v_{4}$ ({\color{red} $7$}, {\color{blue} $1$})};
    \node (5_label) [ ] at (5 * \espaco+\rot:\raio + 1) {$v_{5}$ ({\color{red} $1$}, {\color{blue} $7$})};
    \node (6_label) [ ] at (6 * \espaco+\rot:\raio + 1) {$v_{6}$ ({\color{red} $2$}, {\color{blue} $6$})};
    \node (7_label) [ ] at (7 * \espaco+\rot:\raio + 1) {$v_{7}$ ({\color{red} $3$}, {\color{blue} $5$})};
    \node (8_label) [ ] at (8 * \espaco+\rot:\raio + 1) {$v_{8}$ ({\color{red} $4$}, {\color{blue} $4$})};
    \node (9_label) [ ] at (9 * \espaco+\rot:\raio + 1) {$v_{9}$ ({\color{red} $6$}, {\color{blue} $2$})};
    \node (10_label) [ ] at (10 * \espaco+\rot:\raio + 1) {$y_{1}$ ({\color{red} $1$}, {\color{blue} $1$})};
        
\end{tikzpicture}
}
        \caption{}\label{fig:small-especial-case-3}
      \end{subfigure}%

      \caption{Each figure illustrates a locally irregular 2-edge coloring. 
      We use a square to denote one or more vertices, for example, in (B) $y_1$ could be a single vertex, a pair of vertices, or a triple of vertices with the same neighborhood of $y_1$. In any possible configuration, the coloring presented in (B) is locally irregular.}\label{fig:esp-small-cases}
\end{figure}
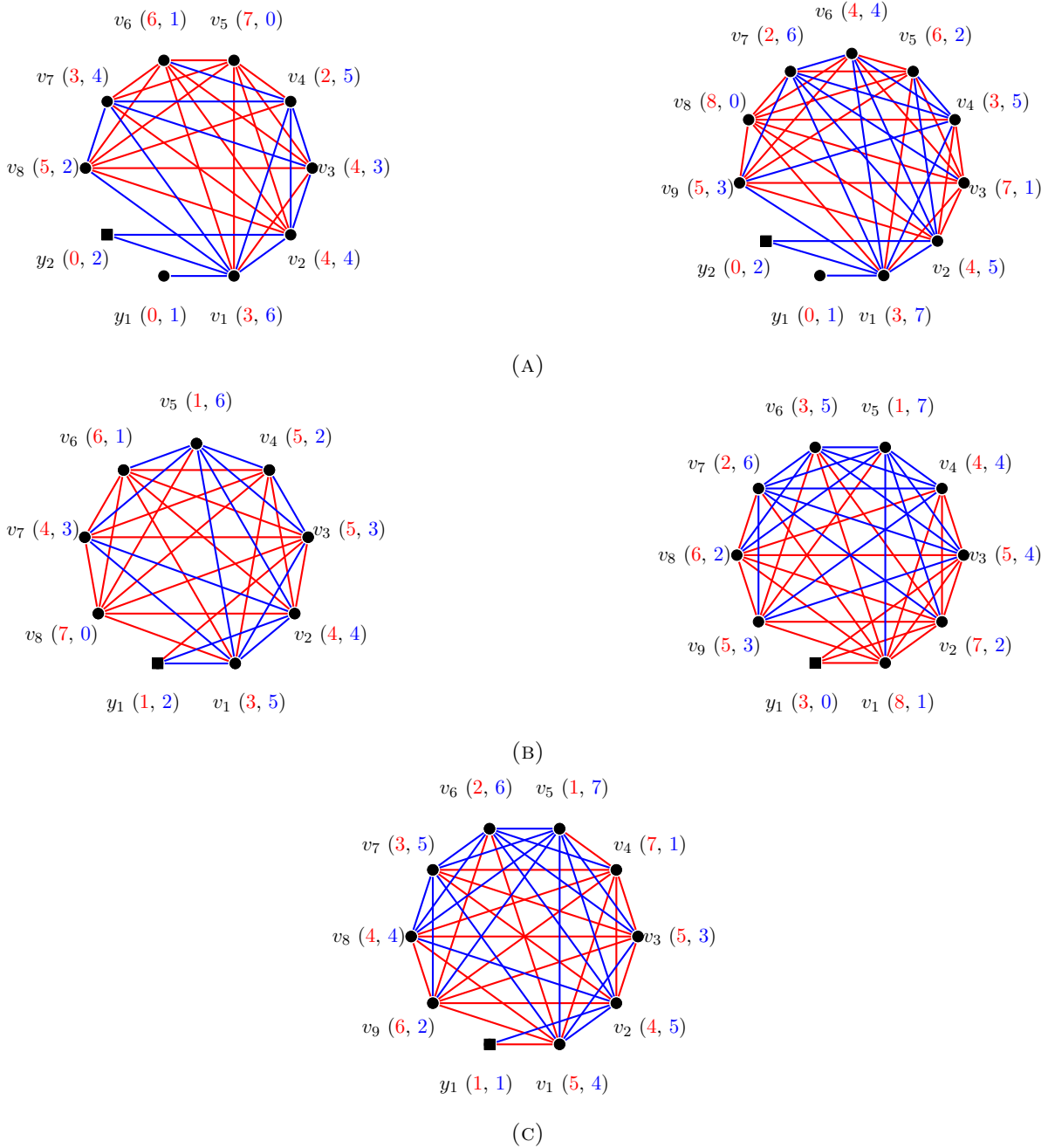

\clearpage

\bibliographystyle{amsplain}
\bibliography{bibliography}

\end{document}